\numberwithin{equation}{section}
\begin{document}
	\newtheorem{de}{Definition}
	\newtheorem{ex}[de]{\emph{Example}}
	\newtheorem{thm}[de]{Theorem}
	\newtheorem{lemma}[de]{Lemma}
	\newtheorem{cor}[de]{Corollary}
	\newtheorem{con}[de]{Conjecture}
	\newtheorem{prop}[de]{Proposition}
	\newcommand{\sla}[2]{\ {\ }_{#1}  /^{{#2}}}
    \title{Efficient geodesics in the curve complex and their dot graphs}
	
	\author{Hong Chang}
	
	\address{
		\noindent
		Hong Chang, hchang24@buffalo.edu, Department of Mathematics, University at Buffalo--SUNY}

	
	\keywords{curve graph, origami, coherent pair, origami pair of curves.} 
	\begin{abstract}
	For the complex of curves of a closed orientable surface of genus $g$, $\mathcal{C}(S_{g>1})$, the notion of {\em efficient geodesics} in was introduced in \cite{BMM}.  There it was established that there always exists (finitely many) efficient geodesics between any two vertices, $ v_{\alpha} , v_{\beta} \in \mathcal{C}(S_g)$, representing homotopy classes of simple closed curves, $\alpha , \beta \subset S_g$. The main tool for used in establishing the existence of efficient geodesic was a {\em dot graph}, a booking scheme for recording the intersection pattern of a {\em reference arc}, $\gamma \subset S_g$, with the simple closed curves associated with the vertices of geodesic path in the zero skeleton, $\mathcal{C}^0(S_g)$.  In particular, for an efficient geodesic between $v_\alpha $ and $v_\beta$ of length $d \geq 3$, it was shown that any curve corresponding to the vertex that is distance one from $v_\alpha$ intersects any $\gamma$ at most $d -2$ times.  In this note we make a more expansive study of the characterizing ``shape'' of the dot graphs over the entire set of vertices in an efficient geodesic edge-path.  The key take away of this study is that the shape of a dot graph for any efficient geodesic is contained within a {\em spindle shape} region.  Since the Nielson-Thurston coordinates of any curve on $S_g$ are directly derived from its intersection number with finitely many reference arcs, spindle shaped dot graphs control the coordinate behavior of curves associated with the vertices of an efficient geodesic.
	\end{abstract}
	\maketitle
	\section{Introduction}
	
	The complex of curves, $\mathcal{C}(S)$ for a closed surface $S$, is the simplicial flag complex whose vertices correspond to isotopy classes of essential simple closed curves in $S$ and whose edges connect vertices with disjoint representatives.  When $S$ is a closed oriented surface of genus $g \geq 2$, the $1$-skeleton, $\mathcal{C}^1(S)$, will be path-connect. Thus, the $0$-skeleton, $\mathcal{C}^0 (S)$, can be endowed with a metric by defining the distance between two vertices to be the minimal number of edges in any edge path in $\mathcal{C}^1(S)$ between the two vertices. With this metric in hand, the nature of the coarse geometry of $\mathcal{C}(S)$ has been extensively studied and has deep applications to 3-manifolds, mapping class groups, and Teichm\"uller space.  (See for example \cite{M}.) The seminal result, due to Masur and Minsky in 1996, states that $\mathcal{C}^0 (S)$ is $\delta$-hyperbolic \cite{MM}.  Since Masur-Minsky's initial hyperbolicity result, it has been shown that $\delta$ can be chosen independently of of the genus of $S$. (See \cite{A, B, CRS, HPW, PS}.)

    Computing actual distances between vertices in the curve complex has proved to be a subtle task.  Initial progress for giving an effective algorithm to compute distance between two vertices of $\mathcal{C}(S)$ was made by Leasure \cite{Lea}, followed by Shackleton \cite{S}, Webb \cite{We}, and Watanabe \cite{Wa}.  More recently, using the notion of {\em tight geodesics} coming from the work of Masur-Minsky \cite{MM}, Bell and Webb \cite{BW} have given an algorithm the described an algorithm that computes distance in polynomial time.  
    
    The efforts of this note is based upon Birman, Maralgit and Menasco's innovative work on {\em efficient geodesics} in $\mathcal{C}(S)$ \cite{BMM}.  Notably in \cite{MICC} a partial implementation of an algorithm based on efficient geodesics is available. Further work on efficient geodesic can be found in \cite{BMW} and \cite{Jin}.
    \smallskip
    \smallskip
    
    \noindent {\em Efficient geodesics--} Let $ v_0, \cdots , v_n$ be a geodesic path in $\mathcal{C}^0(S)$ with $n \leq 3$, that is $d(v_i , v_j) = |i - j|$ where $d$ is the edge distance function on $\mathcal{C}^0(S)$. Let $\{\alpha_0 , \alpha_1,  \alpha_n\} \subset S$ be the curves representing the isotopy classes of $v_0, v_1, v_n$, respectively, that are pairwise in minimal position (this configuration is unique up to isotopy of $S$).
    A reference arc for the triple $\alpha_0, \alpha_1, \alpha_n$, is an arc $\gamma \subset S \setminus (\alpha_0 \cup \alpha_n)$ that is in minimal position with $\alpha_1$.  Then the oriented geodesic $v_0, \cdots , v_n$ is {\em initially efficient} if $|\alpha_1 \cap \gamma | \leq n - 1$ for all choices of reference arcs $\gamma$. Next, $ v = v_0 , \cdots , v_n = w $ is {\em efficient} if the oriented geodesic $v_k , \cdots , v_n$ is initially efficient for each $0 \leq k \leq n - 3$ and the oriented geodesic $v_n,v_{n-1},v_{n-2},v_{n-3}$ is also initially efficient.  Thus, we have the triple $v_k,v_{k+1}, v_n$ accounting for $n-k-1$ points of intersection of (a representative of) $v_{k+1}$ with any such reference arc $\gamma$.  As observed in \cite{BMM}, while there are infinitely many reference arcs, there is a natural reduction to a finite collect of reference arcs which need to be checked in the definition of initial efficiency.
    \smallskip
    \smallskip
    
    \noindent {\em The dot graph and its surgeries--}
    The main tools in \cite{BMM} for establishing the existence of efficient geodesics are surgeries on the {\em dot graph} of a geodesic path.  Specially, given a vertex path $v_0, \cdots , v_n \subset \mathcal{C}(S)$ with representative curves $\alpha_0, \cdots , \alpha_n \subset S$ and an oriented reference arc $\gamma$, we obtain a corresponding graph by placing a point at the coordinate, $(k,i) \in \mathbb{R}^2$, if the $k^{\rm th}$ intersection along $\gamma$ is $\alpha_i$.  The set of dots can be arranged into a collections of ascending line-segments of dots, i.e. a sawtooth pattern.  Once so arranged, it is readily discernible when a given sawtooth pattern admit one three possible surgeries---box and hexgon type $1$ and $2$ surgeries---that reduces the intersection of the $\alpha_i {\rm 's}$ with $\gamma$.  Our contribution to this machinery is an additional surgery---the double box surgery.
    \smallskip
    \smallskip
    
    \noindent {\em Dot graphs, spindles and main result--}
    It is possible that although a geodesic is efficient, its dot graph may still admit the application of one of our four surgeries.  As such, we consider {\em irreducible} geodesics, those for which their dot graph does not admitted a box, hexagon (type 1 or 2) or double-box surgery.
    
    \emph{Spindles} are a type of dot graphs that has the ascending line-segments grouped together to form an ``upper triangle'' portion and a ``lower triangle'' portion plus a maximal length line-segment between these two triangles.  See Fig. \ref{figure: spindle} for an example of this grouping.  The salient feature of a spindle is that it does not admit one of our four surgeries.  Please see \S \ref{subsection: sawtooth} and \ref{section: finding surgeries} for precise definitions of surgeries and spindle.
    
    \begin{figure}[h]
\labellist

\pinlabel ${\rm lower \ triangle}$ at 465 120

\pinlabel ${\rm upper \ triangle}$ at 45 243

\pinlabel ${\rm maximal \ \ segment}$ at 312 243

\endlabellist
    \centering
    \includegraphics[width=.8\linewidth]{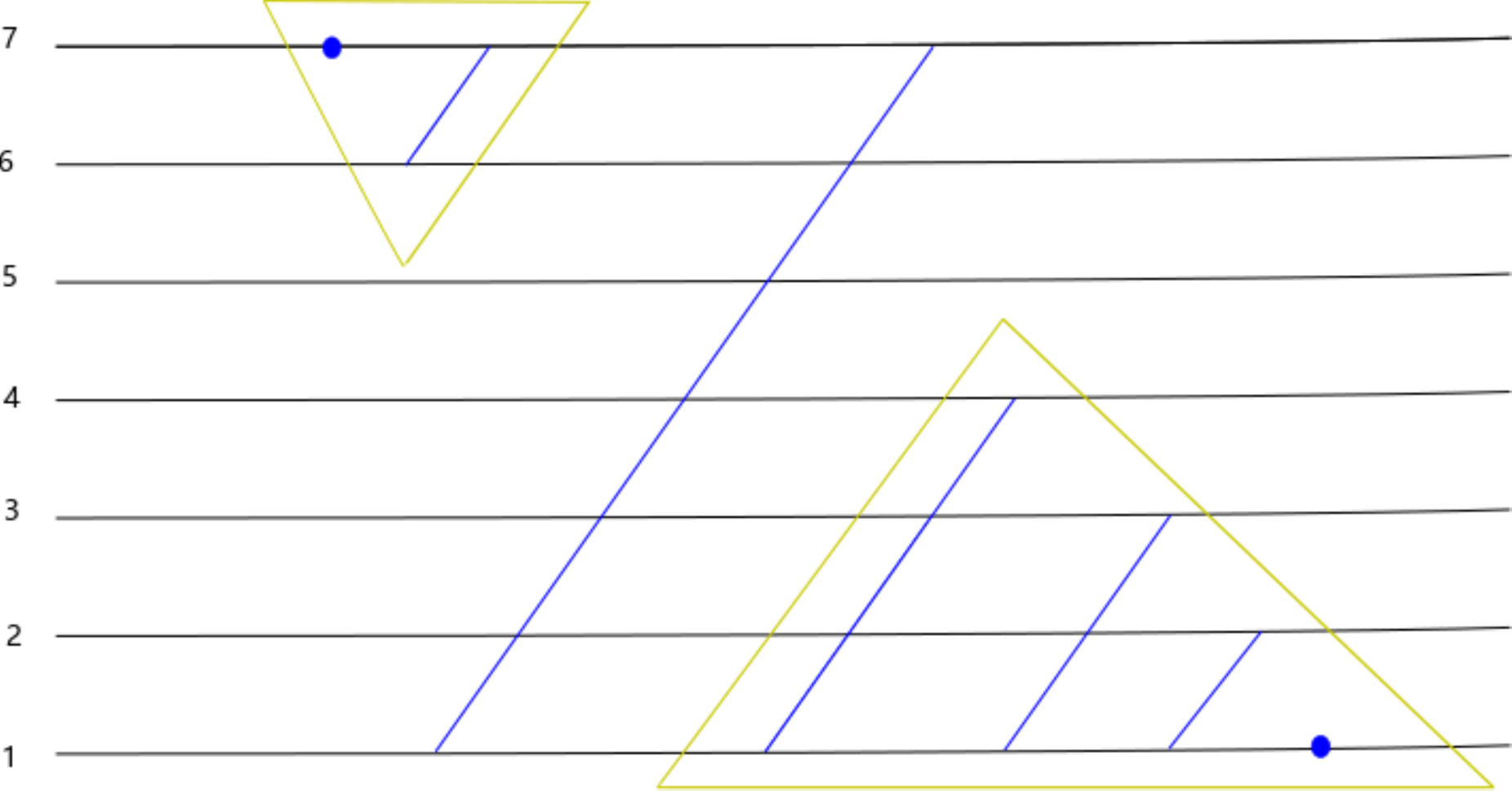}
		\caption{A spindle that is made up of three parts: the upper triangle, the maximum segment and the lower triangle}
		\label{figure: spindle}
	\end{figure}
    \begin{thm}
		The dot graph of any irreducible geodesic is a sub-graph of a spindle.
	\end{thm}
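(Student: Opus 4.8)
The plan is to argue by contrapositive: I will show that if the dot graph of a geodesic fails to be a sub-graph of a spindle, then it admits at least one of the four surgeries (box, hexagon type~1, hexagon type~2, or double-box), so that the geodesic is reducible. Establishing this implication for every non-spindle configuration gives the theorem. First I would invoke the sawtooth decomposition of \S\ref{subsection: sawtooth}, writing the dot graph as the disjoint union of its maximal ascending line-segments $\sigma_1, \ldots, \sigma_m$, ordered by the $x$-coordinate (position along $\gamma$) of their left-most dot. Each $\sigma_j$ is determined by its initial and terminal dots, hence by a horizontal span $[a_j,b_j]$ along $\gamma$ and a vertical span $[c_j,d_j]$ of curve-indices; I will track these spans throughout.

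The core of the argument is a dictionary between the four surgeries and the relative positions of pairs of segments. Reading off the definitions in \S\ref{section: finding surgeries}, I would show that each surgery type is detected locally by two segments in a prescribed relative placement: a box surgery is available exactly when two segments overlap in the rectangular pattern, the two hexagon surgeries when the overlap is sheared by one unit in either of the two admissible directions, and the double-box surgery when a pair realizes the doubled overlap that the three classical surgeries individually miss. Concretely, for each ordered pair $(\sigma_j,\sigma_k)$ I record the placements that do \emph{not} trigger any surgery as a short list of inequalities among $a_j, c_j, d_j$ and $a_k, c_k, d_k$. Irreducibility then says precisely that every pair of segments sits in one of these admissible placements.

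With the dictionary in hand I would run a sorting argument on the pairwise constraints. The segment of greatest span is singled out as the \emph{maximal segment} (ties being the delicate point discussed below). The admissibility inequalities then force each remaining segment to lie entirely in the region to the upper-left of the maximal segment or entirely to its lower-right, and, within each family, to be nested as a monotone staircase — a descending staircase on the upper-left and another on the lower-right. These two staircases are exactly the upper and lower triangles of the spindle description, with the maximal segment between them, so the union of all segments is contained in a spindle and the theorem follows.

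The main obstacle I anticipate is the passage from pairwise to global structure: showing that the pairwise admissibility relations are transitive enough to produce a single nesting order together with a well-defined separating maximal segment, rather than merely a collection of locally compatible pairs. The delicate case is a pair of long segments of equal or nearly equal span whose vertical ranges overlap; here the three classical surgeries can all fail to apply, and it is exactly the absence of the new double-box surgery that forbids the remaining ``crossed'' configuration and forces such segments onto a common staircase. Verifying that the double-box constraint closes this gap — and that no configuration escapes all four surgeries while still violating the spindle inequalities — is the crux, and it will require the careful finite case analysis assembled in \S\ref{section: finding surgeries}.
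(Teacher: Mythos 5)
Your overall logic (contrapositive: not contained in a spindle implies some surgery applies) is equivalent to the theorem, but the engine you propose for it --- a dictionary reducing irreducibility to inequalities between \emph{pairs} of ascending segments --- does not exist, and this is a genuine gap rather than a technicality. Of the four surgeries, only the box is a condition on two segments; both hexagon types and the double box are conditions on \emph{three} segments, so no list of constraints on ordered pairs $(\sigma_j,\sigma_k)$ can characterize when they apply. Worse, surgery availability is not even local to the segments named in the pattern: dots of other segments lying inside a configuration can block or enable the move. The paper's own example in \S\ref{section: finding surgeries}, the pierced box $\bigl[\sla{3}{4},\sla{2}{5},\sla{3}{4}\bigr]$, requires tracking exactly which intermediate dots block each attempted reconnection before one can decide reducibility --- information invisible to pairwise placement data. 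This defeats the sorting argument built on the dictionary, and the obstacle you flag yourself (passing from pairwise constraints to a global staircase) is exactly where the proof would break; deferring it to the ``finite case analysis'' of \S\ref{section: finding surgeries} does not help, because that section is an algorithm for testing a given graph, not a classification of irreducible configurations.

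The paper's proof takes a different, and in this setting essentially unavoidable, detour through \emph{maximality}. It defines a maximum dot irreducible graph (dot irreducible, but adding any dot makes it dot reducible), proves that such graphs are exactly spindles, and then deduces your statement from the proposition that every dot irreducible graph embeds in a maximal one. Maximality is what powers every structural step: for instance, in Step 3 the longest segment ending at $n-1$ is extended to $\sla{1}{n-1}$; by maximality the extended graph must be reducible, and analyzing a surgery that necessarily uses the added dots (together with the innermost box/hexagon lemma, Lemma 3.6 of \cite{BMM}) either produces a surgery already present in the original graph (contradiction) or forces the segment to have been $\sla{1}{n-1}$ all along. Your proposal has no counterpart to this add-a-dot technique, which is precisely what lets the paper avoid classifying arbitrary sparse non-spindle configurations. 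A second, smaller omission: you restrict to the four surgeries named in the introduction, whereas the paper's Steps 4, 5 and 7 also invoke the double hexagon surgery; without it, the claim that every non-spindle configuration admits one of your four moves is not established anywhere. To repair your approach you would need, at minimum, to replace pairwise admissibility with an analysis of segment triples plus piercing data, and to find a substitute for the leverage that maximality provides.
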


    \subsection{Outline of paper}
    In \S~\ref{section: prliminary}, we introduce and expand on the machinery of \cite{BMM}, including the dot graph, the sawtooth form,  and the reducibility criterion. Then we give a new surgery that will be used to prove our main theorem. In \S~\ref{section: proof of theorem}, we prove our main theorem. In \S~\ref{section: finding surgeries}, we describe an algorithm to find possible surgeries for dot reducible dot graphs. In \S~\ref{section: tree}, we introduce reference tree and discuss its reducibility.
    
    \section*{Acknowledgements}
    I would like to thank my advisor William Menasco for showing me the concept of efficient geodesics and suggesting the $\sla{a}{b}$ notation for expressing line-segments in the dot graphs.  He helped me a lot on writing this paper, especially the introduction part.
    
    \section{Preliminary material}
    \label{section: prliminary}
    Let $\alpha_0 , \alpha_n \subset S$ be two simple closed curves (s.c.c.) representing the isotopy classes, $v_0, v_n$, respectively, with distance, $d(v_0 , v_n) \geq 3$.  Then $(\alpha_0, \alpha_n)$ is a {\em filling} pair.  We assume the geometry intersection number, $i(\alpha_0 , \alpha_n)$ is minimal.   Below is the setup in Section 3 of \cite{BMM}, the readers who are familiar with this reference may skip ahead to \S \ref{subsection: sawtooth}.
    
    \subsection{Complexity and a reducibility criterion}
    Let $v_0,v_1,...,v_n$ be a path in $\mathcal{C}^0(S)$ and $\alpha_0,\alpha_1,...,\alpha_n$ be the associated s.c.c.'s. Borrowing from Leasure \cite{Lea}, we have the concept of a \emph{reference arc, $\gamma$ of $\alpha_0,\alpha_1,...,\alpha_n$}.  Specifically, $\gamma$ satisfies the following:
	
	(1) $\gamma$ has its interior disjoint from $\alpha_0\cup\alpha_n$
	
	(2) $\gamma$ has endpoints disjoint from $\alpha_1,...,\alpha_{n-1}$,
	
	(3) all triple intersections $\alpha_i\cap\alpha_j\cap\gamma$ are trivial for $i\not=j$, and
	
	(4) $\gamma$ is in minimal position with each of $\alpha_0,\alpha_1,...,\alpha_n$.
	
	For $n \geq 3$, when $v_0, \cdots , v_n$ is a geodesic, we say the geodesic is {\em initially efficient} if $|\alpha_1 \cap \gamma | \leq n - 1$ for all choices of reference arcs $\gamma$. Next, $ v = v_0 , \cdots , v_n = w $ is {\em efficient} if the oriented geodesic $v_k , \cdots , v_n$ is initially efficient for each $0 \leq k \leq n - 3$ and the oriented geodesic $v_n,v_{n-1},v_{n-2},v_{n-3}$ is also initially efficient.  Thus, we have the triple $v_k,v_{k+1}, v_n$ accounting for $n-k-1$ points of intersection of (a representative of) $v_{k+1}$ with any such reference arc $\gamma$.
	
	We define \emph{complexity} of the path $v_0, \cdots , v_n$ as $$\sum_{k=1}^{n-1}(i(v_0,v_k)+i(v_k,v_n))$$where $i(u,v)$ is the minimal intersection number over all representatives of $u$ and $v$ that are in minimal position. \cite{BMM} showed geodesics with minimal complexity are naturally efficient.
	
	There is relation between complexity and the intersections between the $\alpha_1,...,\alpha_{n-1}$ and the reference arc: suppose the curve $\alpha_i$ intersects $\gamma$ which lies inside a disk bounded by arcs of $\alpha_0\cup\alpha_n$, then $\alpha_i$ must also intersects twice with the boundary of the disk, contributing the complexity.
	
	The lower bound for the complexity of geodesics of distance $d$ is easy to find: From \cite{Hem} we know that for the distance and intersections of two curves $\alpha$ and $\beta$ we have $$d(\alpha,\beta)\le 2log_2(i(\alpha,\beta))+2,$$ so  the minimal intersecting number of two curves with distance $d$ is $2^{\frac{d-2}{2}}$ so the minimal complexity is $$\sum_{k=1}^{n-1}(i(v_0,v_k)+i(v_k,v_n))\ge 2\sum_{k=2}^{n-1}2^{\frac{k-2}{2}}$$
	
	When $n$ is small, the lower bound is obtainable. For example, when $n=4$, Fig. \ref{Figure: Distance 4} reaches the theoretical minimal $2\cdot(1+4)=10$. Fig. \ref{Figure: Distance 4} is the distance $4$ filling pair given in \cite{MICC}.
	
	\begin{figure}[h]
		\scalebox{.5}{\includegraphics[angle=0,origin=c]{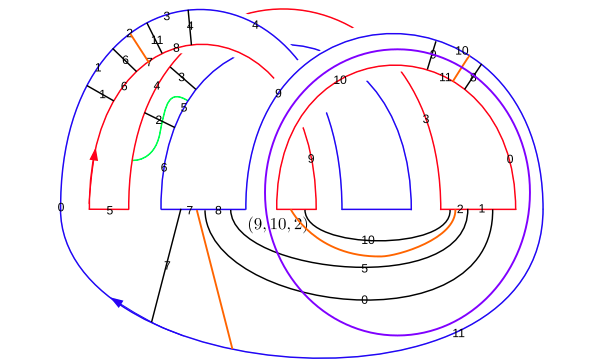}}
		\vspace{0cm}
		\caption{Example of a geodesic with distance 4. The blue is glued with the red, forming $\alpha_0$ and the black is $\alpha_4$ which fills and uniquely determined a surface $S_2$ of genus 2. The violet is $\alpha_1$, the green is $\alpha_2$ and the orange is $\alpha_3$.}
		\label{Figure: Distance 4}
	\end{figure}
	
	Now we are looking for an upper bound for the complexity. In general, the complexity can be arbitrary large for a geodesic connecting two curves $\alpha_0$ and $\alpha_n$. In order to give a bound, we only consider geodesics with no possible surgeries to reduce the complexity.
    \subsection{Sawtooth form and the dot graph}
    \label{subsection: sawtooth}

	A \emph{dot graph} is a graphical representation of the sequence of vertices $v_i$ seen along a reference arc; there is a dot
	at the point $(k,i)$ in the plane if the $k$th vertex along the arc is $v_i$. A dot graph (or its sequence) is called in \emph{sawtooth form} if the sequence $(j_1,...,j_k)$ satisfy$$j_i<j_{i+1}\Rightarrow j_i+1=j_{i+1}$$Lemma 3.3 in \cite{BMM} showed any dot graph can be modified into sawtooth form using commutations.  To enhance associated sequences we connect the ascending dots with a line-segment.  See Fig. \ref{figure: sawtooth graph} for an example.
	
	\begin{figure}[h]
		\scalebox{1}{\includegraphics[angle=0,origin=c]{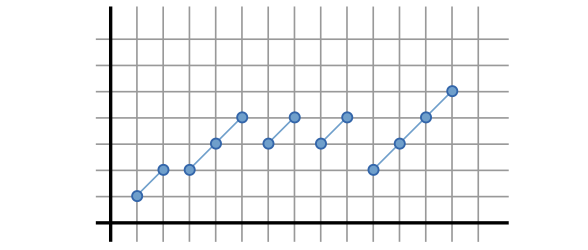}}
		\vspace{0cm}
		\caption{Example of sawtooth form dot graph in \cite{BMM}.}
		\label{figure: sawtooth graph}
	\end{figure}
	
	 In a saw-tooth dot graph, if an ascending line-segment starts at $p$ and end at $q$, we denote the segment as $\sla{p}{q}$ ($p$ can be equal to $q$ and it would become single dot); if the graph (from left to right) is made of ascending line-segments $\sla{p_1}{q_1}, \cdots, \sla{p_k}{q_k}$, then we denote the dot graph $$\bigg[\sla{p_1}{q_1},...,\sla{p_k}{q_k}\bigg]$$.
	 
	 For example, with the new notation, the sawtooth in Fig. \ref{figure: sawtooth graph} will be denoted as $\bigg[\sla{2}{3},\sla{3}{5},\sla{4}{5},\sla{4}{5},\sla{3}{6}\bigg]$.

\begin{de}
\label{def: spindle}
      \emph{Let $v_0, \cdots , v_n$ be a path in $S$ and along the reference arc $\gamma$ we see a dot graph. A}  lower complete triangle dot graph \emph{is defined as} $$\bigg[\sla{1}{n-1},\sla{1}{n-2},...,\sla{1}{1}\bigg]$$ \emph{and a} upper complete triangle \emph{is defined as} $$\bigg[\sla{n-1}{n-1},\sla{n-2}{n-1},...,\sla{1}{n-1}\bigg].$$
      \emph{Let $0\le k\le n-2$ be an integer, we call a dot graph $$\bigg[\sla{n-1}{n-1},...,\sla{k+2}{n-1},\sla{1}{n-1},\sla{1}{k},\sla{1}{k-1},...,\sla{1}{1}\bigg]$$ a} spindle at $k$. \emph{Notice lower complete triangle and upper complete triangle are degenerated spindle with $k=n-2$ and $k=0$.}
\end{de}

    \subsection{Surgeries}
    All surgeries occur in a neighborhood of $\gamma$.  A \emph{surgery} of $\alpha$ with respect of $\gamma$ is an operation that, first, cuts $\alpha$ into arcs $\alpha \setminus \alpha\cap\gamma$ and, second, reconnects some of them (and deletes the others) with arc $\delta$ in the neighborhood of $\gamma$ such that the result is a curve. A surgery of a path $\alpha_0,\alpha_1,...,\alpha_n$ with respect of $\gamma$ is a set of surgeries of some of the curves such that the result is still a path (i.e. every two adjacent curves is distance 1 in $C(S_g)$). A \emph{dot reducible} dot graph is a dot graph that we can make a surgery to reduce the number of its entries.
    
    In \cite{BMM}, if we let a neighborhood of $\gamma$ in the plane so that $\gamma$ is a horizontal arc oriented to
the right. We say that $\alpha$ is obtained from $\alpha$ by $++, +-, -+$, or $--$ surgery along $\gamma$ ;
the first symbol is $+$ or $-$ depending on whether the first endpoint of $\delta$ (as measured
by the orientation of $\gamma$ ) lies above $\gamma$ or below, and similarly for the second symbol. The reader is referred to Fig.10 in that paper for a graph of it. box and hexagon surgeries for dot graphs are also introduced in \cite{BMM}, here are they:  Fig. \ref{figure: box and hexagon}. These figures are from \cite{BMM} and the author would like to thank them for providing it. With the new notation, a \emph{box} can be written as $\bigg[\sla{p_1}{q_1},\sla{p_2}{q_2}\bigg]$ where $p_1\le p_2$ and $q_1\le q_2$. A \emph{hexagon} of type 1 can be written as $\bigg[\sla{p_1}{q_1},\sla{p_2}{q_2},\sla{p_3}{q_3}\bigg]$ where $p_1\le p_2$, $q_1\le q_3$ and $p_3\le q_2$. A \emph{hexagon} of type 2 can be written as $\bigg[\sla{p_1}{q_1},\sla{p_2}{q_2},\sla{p_3}{q_3}\bigg]$ where $p_1\le p_3$, $q_2\le q_3$ and $p_2\le q_1$. The reader is referred to \cite{BMM} for their surgeries.
    
    \begin{figure}[h]
		\scalebox{1}{\includegraphics[angle=0,origin=c]{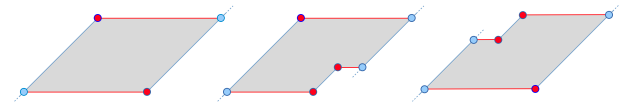}}
		\vspace{0cm}
		\caption{A box, a hexagon of type 1, and a hexagon of type 2; the red dots are required to be
endpoints of ascending segments, while the blue dots may or may not be endpoints}
		\label{figure: box and hexagon}
	\end{figure}
	 
	In this paper, we will also introduce two types of new surgeries in addition to the two surgeries above, the double box surgery and the double hexagon surgery.
	
	\quad A \emph{double box} in $G(\sigma)$ is $\sigma$-octagon $P$ with the following four properties:
	\begin{enumerate}
		\item There is an ascending segment which is neither leftmost nor rightmost that contains the highest and lowest point of $G(\sigma)$,
		\item the leftmost ascending segment contains the highest point of some ascending segment of $G(\sigma)$,
		\item the rightmost ascending segment contains the lowest point of some ascending segment of $G(\sigma)$,
		\item the lowest point of leftmost ascending segment is at most one grid higher than the highest point of rightmost ascending segment.
	\end{enumerate}
	With the new notations, double box can be written as $\bigg[\sla{p_1}{q_1},\sla{p_2}{q_2},\sla{p_3}{q_3}\bigg]$, where $q_1\le q_2$, $p_2\le p_3$ and $p_1\le q_3+1$.
	\begin{figure}[h]
		\scalebox{.5}{\includegraphics[angle=0,origin=c]{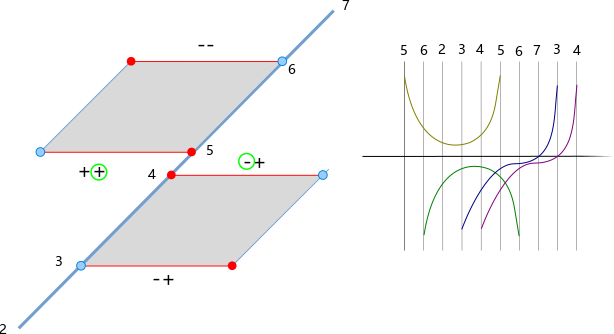}}
		\vspace{0cm}
		\caption{A double box and its surgery. The new 6 does intersect with 3 and 4, but it does not matter.   }
		\label{double box}
	\end{figure}
	
	For double box, we can make a surgery just like a regular box. We make surgery from lowest to highest following the directed graph Fig.12 in \cite{BMM}. Notice when the direction changes we need to make sure the left of the bottom one is different from the right of the top one (marked in the figure), unlike in Fig.12 of \cite{BMM}. Notice in this surgery the box can be pierced.
   
    \begin{figure}[h]
		\scalebox{.5}{\includegraphics[angle=0,origin=c]{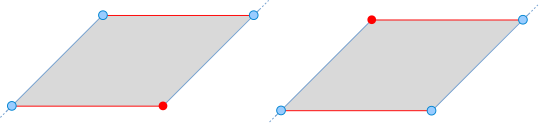}}
		\vspace{0cm}
		\caption{Left and right boxes when the circled dots are required to be endpoints while the crossed dots are not. When crossed dots are endpoints, they become real boxes.}
		\label{Left and right boxes}
	\end{figure}
	\quad A \emph{left half box} in a $G(\sigma)$ is a $\sigma$-$G(\sigma)$-quadrilateral with only the condition "the leftmost ascending segment contains the highest point of some ascending segment of $G(\sigma)$" while a \emph{right half box} only satisfy "the rightmost ascending segment contains the lowest point of some ascending segment of $G(\sigma)$". With the new notations, left box can be written as $\bigg[\sla{p_1}{q_1},\sla{p_2}{q_2}\bigg]$, where $p_1\le p_2$ and right box can be written as $\bigg[\sla{p_1}{q_1},\sla{p_2}{q_2}\bigg]$, where $q_1\le q_2$. We can't perform surgeries with the half boxes however we have:
	
	\quad A \emph{double hexagon} is a combination of a left half box in the left side and a right half box in the right side.
	\begin{figure}[h]
		\scalebox{.45}{\includegraphics[angle=0,origin=c]{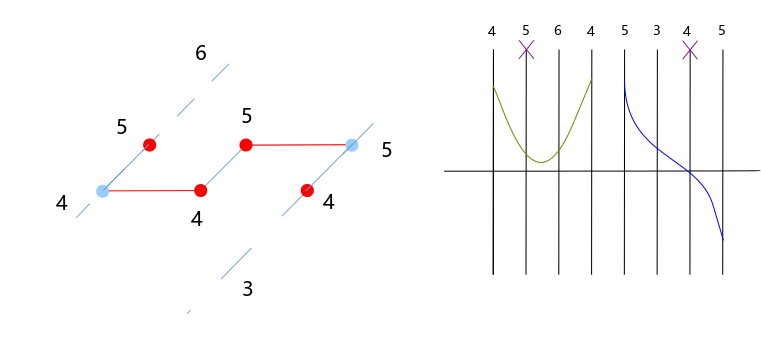}}
		\vspace{0cm}
		\caption{A double hexagon and its surgery, crossed dots are eliminated. Notice the left and right surgeries are not "connected" so they can preform at the same time}
		\label{figure: double hexagon}
	\end{figure}
	For surgeries of double hexagon, we need to do two surgeries at the same time: first we do the surgery in the left to eliminate the lowest point in the right, then we do the same in the right to eliminate the highest point in the left. The two surgeries not "connected" (see the discussion in \S \ref{section: finding surgeries} so they can preform at the same time.
    
	\section{The proof of the theorem}
	\label{section: proof of theorem}

	Dot reducibility is not the same as reducibility mentioned in \cite{BMM}. However we have the following lemma.
	
	\begin{lemma}
	\label{lemma: dot reducible}
		If a path in $C(S)$ is dot reducible, then it's reducible.
	\end{lemma}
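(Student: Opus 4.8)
The plan is to realize the witnessing surgery on the curves themselves and to show that it strictly lowers the complexity $\sum_{k=1}^{n-1}\big(i(v_0,v_k)+i(v_k,v_n)\big)$; producing a path with the same endpoints and strictly smaller complexity is exactly what it means for $v_0,\dots,v_n$ to be reducible in the sense of \cite{BMM}.

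First I would fix the ambient picture. Since $d(v_0,v_n)\ge 3$ the pair $(\alpha_0,\alpha_n)$ fills $S$, so every component of $S\setminus(\alpha_0\cup\alpha_n)$ is a disk, and the interior of the reference arc $\gamma$ lies in a single such disk $D$ whose boundary $\partial D$ consists of arcs of $\alpha_0\cup\alpha_n$. Every surgery is supported in a neighborhood $N(\gamma)$ which we may take disjoint from $\alpha_0\cup\alpha_n$, so the surgered curves $\alpha_1',\dots,\alpha_{n-1}'$ agree with $\alpha_1,\dots,\alpha_{n-1}$ away from $N(\gamma)$, the endpoints $\alpha_0,\alpha_n$ are untouched, and (being a surgery of a path) the result $\alpha_0,\alpha_1',\dots,\alpha_{n-1}',\alpha_n$ is again a path. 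As the endpoints and the length $n$ are unchanged and $d(v_0,v_n)=n$, this path is again a geodesic, so it is a legitimate competitor for the complexity.

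The core of the argument is an intersection count localized in $D$. Because $\gamma$ is in minimal position with each $\alpha_i$, no bigon occurs in $D$, so every component arc of $\alpha_i\cap D$ meets $\gamma$ at most once; hence the $\gamma$-crossings of $\alpha_i$ lie on distinct arcs of $\alpha_i\cap D$, and the sub-arc of $\alpha_i$ between two consecutive $\gamma$-crossings must leave $D$ and therefore cross $\partial D\subset\alpha_0\cup\alpha_n$. Now I would compare intersection numbers. The reconnecting arcs $\delta$ lie in $N(\gamma)$ and so are disjoint from $\alpha_0\cup\alpha_n$, while the cut pieces discarded by the surgery carry all of their $(\alpha_0\cup\alpha_n)$-crossings away with them; since nothing else changes outside $N(\gamma)$, we get $|\alpha_k'\cap(\alpha_0\cup\alpha_n)|\le|\alpha_k\cap(\alpha_0\cup\alpha_n)|$ for every $k$, the deficit being the number of discarded crossings. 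Because the surgery strictly decreases the number of dots it must discard at least one cut piece, and by the previous observation that piece meets $\alpha_0\cup\alpha_n$, so at least one inequality is strict. Finally, passing from geometric counts to minimal intersection numbers via $i(\alpha_0,\alpha_k')\le|\alpha_0\cap\alpha_k'|$ and $i(\alpha_k',\alpha_n)\le|\alpha_k'\cap\alpha_n|$, and using that the original triple was already in minimal position so that $i(v_0,v_k)+i(v_k,v_n)=|\alpha_k\cap(\alpha_0\cup\alpha_n)|$, summing over $k$ yields a strict drop in complexity.

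I expect the delicate step to be verifying that the surgery genuinely discards a cut piece crossing $\alpha_0\cup\alpha_n$, rather than merely rerouting every piece through $N(\gamma)$: a reconnection that deletes nothing leaves each $\alpha_k'$ agreeing with $\alpha_k$ outside $N(\gamma)$ and hence does not change the complexity at all, so the whole lemma hinges on showing that reducing the number of dots forces a deletion. This is exactly where the explicit descriptions of the box, hexagon and double-box (and double-hexagon) surgeries enter, together with the disk observation above that any deleted piece is forced across $\partial D$. A minor additional point to check is that no $\alpha_k'$ becomes inessential and no two consecutive curves coincide, since either would collapse the geodesic and contradict $d(v_0,v_n)=n$; this can be handled exactly as in \cite{BMM}.
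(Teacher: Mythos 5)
Your proposal follows essentially the same route as the paper's proof: filling forces $\gamma$ into a single disk of $S\setminus(\alpha_0\cup\alpha_n)$, minimal position forces every cut-off arc of $\alpha_i$ to meet $\alpha_0\cup\alpha_n$, and since the reconnecting arcs in the neighborhood of $\gamma$ avoid $\alpha_0\cup\alpha_n$, a discarded arc yields a strict drop in complexity. The ``delicate step'' you flag---that reducing the number of dots forces at least one arc to be deleted rather than merely rerouted---is exactly the point the paper disposes of in one unjustified sentence (``at least one of the arcs are dropped''), so your write-up matches the paper's argument while being more candid about where the real work lies.
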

	
	\begin{proof}
	Since $\gamma$ is disjoint from $\alpha_0$ and $\alpha_n$ and $\alpha_0$ and $\alpha_n$ fills $S$, $\gamma$ has to lie inside one of the disks bounded by $\alpha_0\cup\alpha_n$. Let $U$ be a neighbourhood of $\gamma$ and $\alpha_i$ be one of the curves that has less intersections with $\gamma$ after the surgery, and $\alpha_{i_1},...\alpha_{i_m}$ are the arcs we get when cutting along $U$. Due to the fact that new $\alpha_i$ intersects less, at least one of the arcs are dropped. Because $\alpha_i$ is in minimal position, all the arcs have to intersect with $\alpha_0\cup\alpha_n$, and the new arcs added in $U$ does not intersect with $\alpha_0$ or $\alpha_n$, the complexity is reduced.
	\end{proof}
	
	However being reducible does not necessarily means dot reducible, since even if the complexity can be reduced, we may not find a surgery to do that.
	
	A \emph{maximum dot irreducible dot graph} is a dot graph such that the corresponding path in the curve complex is dot irreducible but if we add any point(s) to the dot graph, it becomes dot reducible.  Notice the number of dots in maximum dot irreducible dot graph may not be the most among all dot irreducible dot graphs of curves with the same length.
	
	We modify Proposition 3.1 in \cite{BMM}:
	
	\begin{lemma}
		Suppose $\sigma$ is a sequence of elements of the set $\{1,\cdots,n-1\}$. If there exists $k$ such that $\sigma$ has more than $n(n-1)$ entries equal to $k$, then $\sigma$ is dot reducible.
	\end{lemma}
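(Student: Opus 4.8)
The plan is to argue by the pigeonhole principle, exploiting the fact that the hypothesis forces a great many ascending segments to pass through the single height $k$. First I would record the basic translation: in sawtooth form, every entry of $\sigma$ equal to $k$ is the unique height-$k$ dot of some ascending segment $\sla{p}{q}$ with $p\le k\le q$, and distinct occurrences of $k$ lie on distinct segments. Hence the hypothesis says that more than $n(n-1)$ ascending segments $\sla{p_j}{q_j}$ satisfy $p_j\le k\le q_j$; here necessarily $p_j\in\{1,\dots,k\}$ and $q_j\in\{k,\dots,n-1\}$, so there are at most $k\le n-1$ possible bottoms and at most $n-k\le n-1$ possible tops. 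The goal is then to locate, among these many segments, a local configuration matching one of the four reducible patterns (box, hexagon of type $1$ or $2$, or double box).

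Next I would run a two-stage pigeonhole on the through-$k$ segments. Since the tops $q_j$ take at most $n-1$ values, more than $n(n-1)/(n-1)=n$ of these segments share a common top value $q^{*}$; and among those, since the bottoms take at most $n-1$ values and $n>n-1$, two must share a common bottom value $p^{*}$ as well. This produces two through-$k$ segments of identical shape $\sla{p^{*}}{q^{*}}$, say in positions $a<b$ of the dot graph. Since $p^{*}\le p^{*}$ and $q^{*}\le q^{*}$ hold trivially, these two segments sit in the relative position required by a box, and the threshold $n(n-1)$ is chosen with enough slack beyond the naive count $(n-1)^2$ of distinct shapes to push this repetition into a genuinely usable local configuration.

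The delicate point is converting this globally repeated pair into an actual surgery, since all four surgeries are local, involving only two or three \emph{consecutive} segments. Here I would first establish a structural fact from the sawtooth form: between two consecutive through-$k$ segments the intervening segments form a block lying entirely above height $k$ followed by a block lying entirely below $k$. If two consecutive through-$k$ segments are genuinely adjacent, the box surgery applies at once. If they are separated by a single intervening segment $\sla{p_M}{q_M}$, then the double box applies, because its defining inequalities reduce to $q_a\le q_M$ and $p_M\le p_b$, while the third, $p_a\le q_b+1$, is automatic for any pair of through-$k$ segments (indeed $p_a\le k\le q_b$); the two hexagon patterns cover the remaining three-segment sub-cases.

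The main obstacle, which I expect to be the heart of the proof, is reconciling the global pigeonhole with this locality: a favorable pair produced by counting may a priori be separated by many intervening above/below segments, where no two- or three-segment surgery is directly visible. The resolution I would pursue is a case analysis showing that a long intervening block itself contains a consecutive pair in box position (two adjacent above-$k$ segments, or two adjacent below-$k$ segments, in increasing position), so that one may always retreat to an adjacent or once-pierced configuration; this is precisely the role of the extra room in the bound $n(n-1)$ and of the new double box surgery. Once a box, hexagon, or double box is exhibited, the associated surgery strictly decreases the number of entries, so $\sigma$ is dot reducible.
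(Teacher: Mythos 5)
Your first step coincides with the paper's: pigeonhole on the through-$k$ segments yields two segments $\sla{p^{*}}{q^{*}}$ agreeing in both bottom and top (the paper states this contrapositively: if no two agree in both endpoints, there are at most $n(n-1)$ of them). Past that point your argument fails at exactly the place you yourself flag as the heart of the matter, and it contains a concrete error besides. The error: a double box can never be formed by two through-$k$ segments with one intervening segment $M$ between them. The conditions you cite, $q_a\le q_M$ and $p_M\le p_b$, combined with $p_b\le k\le q_a$, give $p_M\le k\le q_M$, so $M$ would itself contain $k$, contradicting that it is intervening; in a double box it is the \emph{middle} segment that must span the full height range, a role an entirely-above-$k$ or entirely-below-$k$ segment cannot play. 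A second slip: two adjacent through-$k$ segments need not admit a box surgery; $\bigl[\sla{2}{5},\sla{1}{4}\bigr]$ with $k=3$ has both endpoints strictly decreasing, so it is neither a box nor even a half box (a box is only guaranteed for the identical pair, which pigeonhole does not make adjacent). Finally, the long-block case is only a plan, and the plan is doubtful: an intervening above-$k$ block such as $\sla{7}{7},\sla{6}{6},\sla{5}{5}$ is strictly descending and contains no pair in box position, so the ``retreat'' you hope for need not exist.

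The idea you are missing is the paper's \emph{other} new surgery, the double hexagon, which is designed precisely to defeat the locality obstruction you correctly identify. The paper lists the through-$k$ segments between (and including) the two identical ones, in order, and classifies each consecutive pair in this list --- with no adjacency in the dot graph required --- as a real box (then done), a left half box ($p_i\le p_{i+1}$, hence $q_i>q_{i+1}$ if not a real box), or a right half box ($q_i\le q_{i+1}$, hence $p_i>p_{i+1}$ if not a real box). Since the first and last segments of the list share both endpoints, the tops cannot strictly decrease throughout, nor can the bottoms; so both a left and a right half box occur, and the paper then locates a left half box adjacent to a right half box (sharing a middle through-$k$ segment). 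Because the two component surgeries of the resulting double hexagon are not ``connected,'' they can be performed simultaneously and eliminate entries, proving dot reducibility without ever needing the segments involved to be adjacent in the dot graph. Without the double hexagon, or a genuine substitute for it, your proposal does not close.
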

	\begin{proof}
	Suppose $\sigma$ is already in sawtooth form. Let $\bigg[\sla{p_1}{q_1}, \cdots, \sla{p_k}{q_k}\bigg]$ denote the ascending segments of $G(\sigma)$ that has element $i$, ordered from left to right.
		
	If $p_i\not=p_j$ or $q_i\not=q_j$ for all $i,j$, we have $k\le n(n-1)$ due to the pigeon-hole rule. Then it's sufficient to show if there exists $p_i\not=p_j$and $q_i\not=q_j$, it's dot reducible.
	
	There will be half boxes that lies between $\sla{p_i}{q_i}$ and $\sla{p_j}{q_j}$. If any of them is a real box, then it's already dot reducible. Assume all half boxes are not real box, then for left half boxes the minimal of right segment will be lower than the left segment; and for right half boxes the maximum of the right segment will be higher than the left segment. Since for the start and the end the maximum and the minimal keep the same, there will be both left half boxes and right half boxes. Further there will be a left half box that is adjacent to a right half box, and they all have the elemental $k$, then we can perform the double hexagon surgery.
	\end{proof}
	From that lemma, we know maximum dot irreducible dot graph is well-defined.

	First we discuss some properties of them:
	
	\begin{lemma}
		There are finite many spindles for given $n$, and the spindles with the most elements are upper and lower complete triangles with $\frac{n(n-1)}{2}$ elements.
	\end{lemma}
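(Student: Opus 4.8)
The plan is to treat the two assertions of the lemma separately, and both reduce to elementary bookkeeping once we notice that a spindle carries only one degree of freedom. For finiteness, I would observe that Definition \ref{def: spindle} specifies the spindle at $k$ \emph{entirely} by the single integer parameter $k$ with $0 \le k \le n-2$: once $k$ is fixed, the upper block $\sla{n-1}{n-1}, \ldots, \sla{k+2}{n-1}$, the maximal segment $\sla{1}{n-1}$, and the lower block $\sla{1}{k}, \ldots, \sla{1}{1}$ are all determined. Hence for fixed $n$ there are exactly $n-1$ spindles (with $k=0$ and $k=n-2$ giving the degenerate upper and lower complete triangles), and in particular only finitely many.

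For the count of elements, I would use that an ascending segment $\sla{p}{q}$ contains exactly $q-p+1$ dots, and compute the three blocks of the spindle at $k$ separately. The upper-triangle segments $\sla{j}{n-1}$ for $k+2\le j\le n-1$ contribute $\sum_{j=k+2}^{n-1}(n-j)=\frac{(n-k-2)(n-k-1)}{2}$ dots; the maximal segment $\sla{1}{n-1}$ contributes $n-1$ dots; and the lower-triangle segments $\sla{1}{j}$ for $1\le j\le k$ contribute $\sum_{j=1}^{k} j=\frac{k(k+1)}{2}$ dots. At the extreme values $k=0$ and $k=n-2$ one of the two triangular sums is empty, which matches the degenerate cases. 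Adding these and simplifying, I expect the total number of elements $N(k)$ to be
\begin{equation*}
N(k) = \frac{(n-k-2)(n-k-1)}{2} + (n-1) + \frac{k(k+1)}{2} = k^2 + (2-n)k + \frac{n(n-1)}{2}.
\end{equation*}

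Finally, I would read off the maximizer directly from this closed form. As a function of the real variable $k$, $N$ is a quadratic with leading coefficient $+1$, hence strictly convex, so its graph is an upward parabola with vertex at $k=\frac{n-2}{2}$. Evaluating at the endpoints gives $N(0)=N(n-2)=\frac{n(n-1)}{2}$, so by convexity the maximum of $N$ over the admissible integers $0\le k\le n-2$ is attained \emph{exactly} at the two endpoints, every interior value being strictly smaller. Since $k=0$ and $k=n-2$ are precisely the upper and lower complete triangles (and these are distinct because the standing hypothesis $d(v_0,v_n)\ge 3$ forces $n\ge 3$, so $n-2\ge 1>0$), this identifies them as the unique spindles with the most elements, namely $\frac{n(n-1)}{2}$.

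The argument is essentially a single computation, so I do not anticipate a genuine obstacle; the only points that require care are the correct handling of the degenerate spindles at $k=0$ and $k=n-2$, where one triangular block vanishes, and confirming that the two endpoint counts genuinely agree, so that \emph{both} complete triangles — rather than just one — realize the maximum.
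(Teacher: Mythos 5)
Your proposal is correct, and in fact it supplies more than the paper does: the paper states this lemma with no proof at all, evidently regarding it as immediate from Definition \ref{def: spindle}. Your argument is a complete verification of exactly the intended content. The finiteness claim follows, as you say, because a spindle is determined by the single integer $k \in \{0,\ldots,n-2\}$, giving $n-1$ spindles; and your dot count is right: the blocks contribute $\frac{(n-k-2)(n-k-1)}{2}$, $n-1$, and $\frac{k(k+1)}{2}$ respectively, which indeed simplifies to
\begin{equation*}
N(k) = k^2 + (2-n)k + \frac{n(n-1)}{2},
\end{equation*}
a strictly convex quadratic whose vertex sits at the midpoint $k=\frac{n-2}{2}$ of the admissible range, so the maximum $N(0)=N(n-2)=\frac{n(n-1)}{2}$ is attained exactly at the two degenerate spindles, i.e.\ the upper and lower complete triangles. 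The only caution I would add is that you are implicitly identifying ``elements'' of the dot graph with dots (entries of the sequence $\sigma$) rather than with ascending segments; that is the reading consistent with the value $\frac{n(n-1)}{2}$ asserted in the lemma, so your interpretation is the right one, but it is worth stating explicitly since the paper never defines the term.
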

	
	\begin{lemma}
		Spindles are dot irreducible.
	\end{lemma}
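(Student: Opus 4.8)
The plan is to extract from Definition~\ref{def: spindle} the bottom and top heights of the $n-1$ ascending segments of a spindle at $k$ as explicit functions of left-to-right position, and then to defeat each surgery type by showing that its defining inequalities cannot hold simultaneously. First I would index the segments $1,\ldots,n-1$ from left to right and write $b_i$, $t_i$ for the bottom and top of the $i$th segment. Positions $1,\ldots,n-k-2$ are the upper-triangle segments, position $n-k-1$ is the maximal segment $\sla{1}{n-1}$, and positions $n-k,\ldots,n-1$ are the lower-triangle segments; reading these off gives $b_i=n-i$ for $i\le n-k-2$ and $b_i=1$ otherwise, while $t_i=n-1$ for $i\le n-k-1$ and $t_i=n-i$ for $i\ge n-k$.

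The whole argument then rests on two monotonicity facts, both immediate from these formulas: for $i<j$ one has $b_i\le b_j$ if and only if $i\ge n-k-1$ (the bottoms strictly decrease across the upper triangle and are constant equal to $1$ thereafter), and $t_i\le t_j$ if and only if $j\le n-k-1$ (the tops are constant equal to $n-1$ through the maximal segment and strictly decrease thereafter). Call these (O1) and (O2). With them the box and the two hexagons fall out mechanically. A box on segments $i<j$ needs $b_i\le b_j$ and $t_i\le t_j$, so (O1) forces $i\ge n-k-1$ and (O2) forces $j\le n-k-1$, contradicting $i<j$. A type-$1$ hexagon on $i<j<l$ needs $b_i\le b_j$ and $t_i\le t_l$, so (O1) gives $i\ge n-k-1$ while (O2) gives $l\le n-k-1\le i$, contradicting $i<l$; the type-$2$ hexagon is symmetric, using $b_i\le b_l$ together with $t_j\le t_l$.

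The delicate case, and the one I expect to be the crux, is the double box, since it is the only one of the four surgeries whose defining inequalities carry any slack, namely the condition $p_1\le q_3+1$. A double box on $i<j<l$ requires $t_i\le t_j$, $b_j\le b_l$, and $b_i\le t_l+1$; by (O2) the first forces $j\le n-k-1$ and by (O1) the second forces $j\ge n-k-1$, so $j=n-k-1$ is exactly the maximal segment. This pins $i$ into the upper triangle, where $b_i\ge k+2$, and $l$ into the lower triangle, where $t_l\le k$, whence $b_i\ge k+2>k+1\ge t_l+1$, contradicting $b_i\le t_l+1$ by precisely one unit. I would stress here that this one-unit failure is exactly the point of the two-step gap built into the spindle by omitting $\sla{k+1}{n-1}$ and $\sla{1}{k+1}$; it is what the $+1$ in the double-box criterion is calibrated against.

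Finally I would note that (O1) and (O2) also forbid a double hexagon: such a configuration needs a left half box (a pair with $b\le b$, hence by (O1) occupying positions $\ge n-k-1$) lying strictly to the left of a right half box (a pair with $t\le t$, hence by (O2) occupying positions $\le n-k-1$), which is impossible. Since no box, hexagon of either type, double box, or double hexagon applies, the spindle admits no surgery and is therefore dot irreducible.
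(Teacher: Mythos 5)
Your coordinate bookkeeping is accurate as far as it goes: the formulas for $b_i$ and $t_i$, the two monotonicity facts (O1) and (O2), and the resulting elimination of the box, both hexagon types, the double box, and the double hexagon are all correct. Your observation that the double box fails by exactly one unit --- reflecting the two-step gap from $k$ to $k+2$ built into the spindle --- is precisely the calibration that reappears in Step 7 of the paper's proof of its main theorem, and this portion of your argument is more explicit than anything written in the paper itself.

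The gap is in what ``dot irreducible'' means. The paper defines a surgery as an \emph{arbitrary} cut-and-reconnect operation in a neighborhood of $\gamma$: cut the curves along $\gamma$, reconnect some of the resulting arcs, delete others, subject only to the result still being a path; ``dot reducible'' means \emph{some} such surgery reduces the number of entries. The box, hexagon, double-box and double-hexagon moves are particular recipes, and the paper never claims (nor proves) that they exhaust all entry-reducing surgeries; indeed \S\ref{section: finding surgeries} is devoted to an algorithm searching for general, multi-step ``connected'' surgeries precisely because checking the named patterns is not sufficient to decide reducibility. Accordingly, the paper's own proof of this lemma quantifies over arbitrary surgeries: it assumes some surgery connects two dots carrying a label $p$, disposes of $p>1$ by induction on the size of a complete triangle, and kills $p=1$ by a blocking argument (a $2$ lies between any two $1$'s, so the cascade re-enters the case $p=2$), before splitting a general spindle into its two triangles. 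Your proof rules out only the five named configurations and never argues that an arbitrary entry-reducing surgery on a spindle would have to exhibit one of them; as written it establishes the weaker statement ``spindles admit none of the named surgeries'' rather than dot irreducibility in the paper's sense. To close the gap you would need either the paper's blocking/induction argument, or a separate lemma showing that any entry-reducing surgery forces one of the named patterns to appear.
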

	
	We first prove lower complete triangles are dot irreducible (similarly upper complete triangles are also dot irreducible). To prove this we use induction. when $n=3$ we have the dot graph $[\sla{1}{2},\sla{1}{1}]$ which is dot irreducible. Suppose it is right for $n=k$ then for $n=k+1$, if we have a surgery with number $p$, when $p>1$ then we just consider the small triangle with distance $n+1-p<k$ and apply induction hypotheses. When $p=1$ there has to be a number 2 lies inside both 1 so it also has to be modified, then it goes to the case $p=2>1$.
	
	For spindles, using the argument above we know there can't be surgeries between two points that are both in the upper or the lower triangle. For one segment in the upper triangle and one in the lower triangle, there also can't be surgeries since they does not share a point at all.
	
	Now we can state the main result of the paper:
	
	\begin{thm}
		Maximum dot irreducible dot graph $G(\sigma)$ in sawtooth form is a (possible degenerated) spindle.
	\end{thm}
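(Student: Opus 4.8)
The plan is to deduce the spindle shape in two stages: first extract from the no-box condition a rigid left-to-right skeleton for the ascending segments, and then use maximality together with the hexagon and (crucially) the double-box conditions to fill that skeleton out to a full spindle.

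\emph{Skeleton from no-box.} Write $G(\sigma)=\bigl[\sla{p_1}{q_1},\ldots,\sla{p_m}{q_m}\bigr]$ from left to right, with all values in $\{1,\ldots,n-1\}$ and $p_i\le q_i$. The absence of a box says exactly that no later segment weakly dominates an earlier one: for $i<j$ one never has both $p_i\le p_j$ and $q_i\le q_j$, equivalently $p_i>p_j$ or $q_i>q_j$. Call a segment a \emph{top} segment if $q_i=n-1$ and a \emph{bottom} segment if $p_i=1$ (so $\sla{1}{n-1}$ is both). I would first record the forced ordering that no-box alone already imposes: among top segments the values $p_i$ are strictly decreasing from left to right (two top segments with $p_i\le p_j$, $i<j$, form a box since $q_i=q_j=n-1$), and dually among bottom segments the $q_i$ strictly decrease. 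Moreover any non-maximal top segment $\sla{a}{n-1}$ with $a\ge 2$ must precede any non-maximal bottom segment $\sla{1}{d}$ with $d\le n-2$, and a maximal segment $\sla{1}{n-1}$, if present, must sit strictly between all of the former and all of the latter; in each case the reversed order produces a pair violating no-box. This gives the three-block order (top block, maximal segment, bottom block) of a spindle for the segments that are already known to be top, maximal, or bottom.

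\emph{Filling out with maximality, hexagons and the double box.} The remaining content is that every segment is a top or a bottom segment, that both blocks are \emph{complete} (no intermediate height is skipped), and that exactly one maximal segment $\sla{1}{n-1}$ is present, separating the blocks at the correct index. I would argue this by maximality in an intertwined way. First, completeness: if the top block omitted a segment $\sla{j}{n-1}$ compatible with its strict-decrease pattern, one checks the box, hexagon, and double-box inequalities and finds that inserting $\sla{j}{n-1}$ keeps $G(\sigma)$ dot irreducible, contradicting maximality; likewise for the bottom block. Once the blocks are complete, an interior segment $\sla{p}{q}$ with $2\le p$ and $q\le n-2$ is ruled out, because together with a top segment to its left and a bottom segment to its right (which now exist by completeness) it realizes one of the hexagon patterns of \cite{BMM} according to which side it overlaps. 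The most delicate point, and the one for which the double box is designed, is the transition index $k$. The key computation is the double-box inequality $p_1\le q_3+1$ applied to the triple (lowest top segment, maximal segment, highest bottom segment) $=(\sla{k+2}{n-1},\sla{1}{n-1},\sla{1}{k})$: here the side conditions $q_1\le q_2$ and $p_2\le p_3$ hold automatically, and the inequality reads $k+2\le k+1$, which fails, so this triple is \emph{not} a double box. Were the gap between the top block's bottom and the bottom block's top any smaller, the inequality would hold and the triple would be a reducible double box; maximality forbids a larger gap. This pins the transition to a single value $k$ and forces $\sla{1}{n-1}$ to bridge it.

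\emph{Assembly and main obstacle.} Combining the three-block order with completeness, the absence of interior segments, and the forced transition, the segment list is exactly $\bigl[\sla{n-1}{n-1},\ldots,\sla{k+2}{n-1},\sla{1}{n-1},\sla{1}{k},\ldots,\sla{1}{1}\bigr]$, which is the spindle at $k$, with the degenerate cases $k=0$ and $k=n-2$ giving the upper and lower complete triangles. I expect the main obstacle to be the second stage, and within it the fact that eliminating interior segments and establishing completeness must be bootstrapped together: an interior segment becomes a hexagon only once the flanking top and bottom segments are guaranteed, which is precisely what maximality-induced completeness supplies, while completeness in turn is argued by checking that a missing segment can be reinserted without creating any of the four surgeries. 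The careful bookkeeping of which configuration falls under the box, the Type~1 hexagon, the Type~2 hexagon, or the double box is where the argument is most technical, and it is exactly the double box, the new surgery of this paper, that closes the gap at the transition index left open by the surgeries of \cite{BMM}.
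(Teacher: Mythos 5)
Your first stage (the no-box ordering of the top segments, the bottom segments, and the maximal segment) is correct, and your transition argument --- the double-box inequality $p_1\le q_3+1$ applied to the triple $(\sla{k+2}{n-1},\sla{1}{n-1},\sla{1}{k})$, with maximality forbidding a wider gap --- is exactly the paper's Step 7. But the middle of your second stage has two genuine gaps. The most serious one: you never prove that the maximal segment $\sla{1}{n-1}$ occurs in $G(\sigma)$ at all. Your skeleton treats it as ``if present,'' and every subsequent argument presupposes it: the transition triple contains it, and the hexagons you invoke against interior segments need it (or a suitable flanking segment) as a witness. Producing this segment is the paper's Step~3 and is the technical heart of its proof: one takes the longest segment ending at $n-1$, extends it down to $\sla{1}{n-1}$, uses maximality to extract a surgery that must involve the added dots, and then runs an inductive case analysis (left case / right case, plus Lemma~3.6 of \cite{BMM}) to exhibit a box in the original graph, a contradiction. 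Nothing in your proposal replaces this step, and without it the ``filling out'' stage has no anchor.

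The second gap is your claim that interior segments $\sla{p}{q}$ ($2\le p$, $q\le n-2$) are eliminated by ``one of the hexagon patterns of \cite{BMM}.'' This fails whenever the interior segment reaches below the top block: take the spindle at $k$ and insert $\sla{2}{n-2}$ (with $2\le k+1$, i.e.\ $k\ge 1$) just before $\sla{1}{n-1}$. A type~1 hexagon with this segment in the middle needs a segment to its left with $p_1\le 2$, and a type~2 hexagon needs one with $p_1\le p_3 = 1$; the tops all have $p\ge k+2\ge 3$ and the bottoms all lie to the right, so no BMM box or hexagon exists in this configuration. It is reducible only via the double box $(\sla{2}{n-2},\sla{1}{n-1},\sla{1}{k})$, using $2\le k+1$, or via the paper's double-hexagon surgery --- which is precisely what the paper's Step~4 introduces the double hexagon for. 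So the new surgeries are needed well before the transition index, not only at it. Relatedly, your ordering of the steps is circular: ``completeness'' of the blocks and the interior-segment exclusion both refer to the blocks' ranges, i.e.\ to the index $k$, but $k$ is only pinned down in your final transition step. The paper avoids this by propagating outward from $\sla{1}{n-1}$ one segment at a time (its Steps~4--6), so that at each stage the needed flanking segments are already in place.
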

	\begin{proof}
	We prove the theorem in the following steps:
	
	Step 1: Show that there are numbers $1,2,...,n-1$ in $G(\sigma)$. If not, we can add the missing numbers at the leftmost of the got graph and it's still dot irreducible, contradicting with maximal.
	
	Step 2: Show that there's no horizontal line (i.e.  $\bigg[\cdots,\sla{p_i}{q_i}, \sla{p_{i+1}}{q_{i+1}},\cdots\bigg]$ where $q_i=p_{i+1}$). If there's horizontal lines, we can do a surgery on it immediately.
	
	Step 3: Show that the longest ascending segment is exactly $\sla{1}{n-1}$. Let $e$ be the longest ascending segment that's end at $n-1$. Let $e$ be $\sla{p}{n-1}$. It's sufficient to show $p=1$. If $p>1$, we extend the segment to $\sla{1}{n-1}$. If the new dot graph is not dot reducible, then we have a surgery that involves the added points. Assume $q<p$ is the point that's connected with another $q$ at segment $e'$.f
	
	If $e'$ is left to $e$, then we know there is element $q$ that in the left of $e$. If there's no $p$ in the left of $e$, then we can combine $e$ and the rightmost segment that is in the left of $e$ to have a new ascending segment. If there's $p$, we find the rightmost $p$ that's in the left of $e$, then we have a box surgery with the original $e$, contradict. 
	
	If $e'$ is right to $e$, consider the point $q+1$ on the new $e$, since it's blocking the surgery connecting $q$ in $e$ and $e'$, it has to be modified in the surgery. If it's connect to a point left to $e$, then we go back to the first case. Assume $q+1$ in the new $e$ is connected to some point in the right, using induction we know $n-1$ is connected to some point in the right. Let the point is in the segment $e''$ ($\sla{p''}{n-1}$). Because in our assumption $e$ ($\sla{p}{n-1}$) is the longest we have $p''\ge p$ so we have a box with segments original $e$ and $e''$. Using Lemma 3.6 in \cite{BMM} we can find an innermost box or hexagon to perform a surgery with original $e$, contradict.
	
	Notice the argument will also be valid if we discuss longest ascending segment that's start at 1, we just need to argue from $q$ to 1 in the induction part this time.
	
	Step 4: Show that if we have $e$ ($\sla{1}{k}$) and $e'$ ($\sla{1}{k-1}$) right to $e$ in $G(\sigma)$, then there's no segments $\sla{p}{q}$ ($q\le k$) lie between them. If $q=k$ then we find the leftmost such segment then there's a box between that segment and $e$ and we apply Lemma 3.6 in \cite{BMM}. Similarly, if $p=1$ then we find the rightmost such segment then there's a box between that segment and $e'$. Assume for segments lies between we have $p>1$ and $q<k$ then we pick the leftmost segment $\sla{p_1}{q_1}$ and rightmost segment $\sla{p_2}{q_2}$ (can be degenerated). Then we can apply the double hexagon surgery.
	
	Notice we can similarly have if we have $e$ ($\sla{k}{n-1}$) and $e'$ ($\sla{k-1}{n-1}$) right to $e$ in $G(\sigma)$, then there's also no segments between them.
	
	Step 5: Show that if there's no point or only segments that ends at some point larger than $k$ in the left of $\sla{1}{k}$ segment in $G(\sigma)$, then the segment just right to it has to be $\sla{1}{k-1}$. The proof here is very similar to what we did in Step 3. We find the longest ascending segment starting at 1 and show it's $\sla{1}{k-1}$. If it ends larger than $k-1$ then we will have a box with the segment $\sla{1}{k}$. If it ends smaller than $k-1$ then we can extend it to $\sla{1}{k-1}$ and apply the argument in Step 3. We just need to know there can't be any surgery between the segments in the right of $\sla{1}{k}$ and the segment $\sla{1}{k}$ or segments in the left because a point larger or equal to $k$ will block.
	
	Now we have an segment $\sla{1}{k-1}$ in the right. If it's not just right to $\sla{1}{k}$, then it will contradict with Step.4.
	
	Step 6: Show that if there's no point in the left of the segment $\sla{1}{n-1}$ we find in Step 3, then $G(\sigma)$ is a lower complete triangle. To prove this, we just need to use Step 5 repeatedly. Similarly, if there's no point in the right of the segment $\sla{1}{n-1}$, we have a higher  complete triangle.
	
	Step 7: Show that $G(\sigma)$ is a spindle. Consider all segments that's right to the segment $\sla{1}{n-1}$. Suppose the highest number is $p$, then we find all the segments ends at $p$ and apply the argument in Step 3. Then we have the segment is exactly $\sla{1}{p}$ and using similar argument as Step 4 we have the segment $\sla{1}{p}$ is just right to $\sla{1}{n-1}$. Similarly we have an segment $\sla{q}{n-1}$ just left to $\sla{1}{n-1}$. If $p\ge q-1$ we can have a double box surgery and if $p<q-2$ we can add the segment $\sla{1}{p+1}$ between $\sla{1}{n-1}$ and $\sla{1}{p}$ and it's still dot irreducible. After all we have $p=q-2$ so it's a spindle.
	\end{proof}
	
	Instantly we have the following corollaries
	
	\begin{cor}
		The maximal total intersecting number between a dot irreducible path and the reference arc $\gamma$ is $\frac{n(n-1)}{2}$ where $G(\sigma)$ is a lower (or higher) complete triangle.
	\end{cor}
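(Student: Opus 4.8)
The plan is to translate the statement into a count of the dots in $G(\sigma)$ and then apply the main theorem together with the counting lemma for spindles. First I would record the elementary observation that the total intersection number between the path $\alpha_1,\ldots,\alpha_{n-1}$ and the reference arc $\gamma$ equals the number of dots appearing in the dot graph $G(\sigma)$: each point where $\gamma$ meets some $\alpha_i$ (necessarily with $1\le i\le n-1$, since $\gamma$ is disjoint from $\alpha_0\cup\alpha_n$) contributes one dot at height $i$, and conversely every dot records one such intersection point. Thus maximizing the total intersection number over dot irreducible paths is the same as maximizing the number of entries of $\sigma$ over dot irreducible dot graphs in sawtooth form.

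Next I would pass from an arbitrary dot irreducible dot graph to a maximum one. Given any dot irreducible $G(\sigma)$, I repeatedly add dots while preserving dot irreducibility. This process must terminate: the earlier lemma shows that once some value $k$ occurs more than $n(n-1)$ times the graph is dot reducible, so every dot irreducible sequence has at most $(n-1)\cdot n(n-1)$ entries, an a priori bound on its length. Hence every dot irreducible dot graph is contained, as a subgraph, in a maximum dot irreducible dot graph. By the main theorem that maximal graph is a (possibly degenerate) spindle, and therefore it suffices to bound the number of dots carried by a spindle.

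Finally I would invoke the counting lemma: among all spindles for a fixed $n$, the ones with the most dots are the upper and lower complete triangles, each having $\tfrac{n(n-1)}{2}$ dots. This yields the upper bound $\tfrac{n(n-1)}{2}$ for the dot count, equivalently the total intersection number, of any dot irreducible path. For the reverse inequality I would note that the lower (and likewise the upper) complete triangle $\bigl[\sla{1}{n-1},\sla{1}{n-2},\ldots,\sla{1}{1}\bigr]$ is itself dot irreducible, being a degenerate spindle, and a direct count of its segments gives $1+2+\cdots+(n-1)=\tfrac{n(n-1)}{2}$ dots; so the bound is attained exactly on the complete triangles, as claimed.

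The main obstacle I anticipate is the reduction in the second paragraph. The definition of a maximum dot irreducible dot graph explicitly warns that such a graph need not contain the largest number of dots among all dot irreducible dot graphs of the same length, so I cannot simply assert that the maximum is attained on a maximal-by-inclusion graph. The careful point is to use the a priori length bound from the counting lemma to guarantee both that the add-a-dot process terminates and that every dot irreducible dot graph embeds into some spindle; only then does the spindle dot count $\tfrac{n(n-1)}{2}$ control the dot count of an arbitrary dot irreducible dot graph, and the characterization of the equality case follows from the uniqueness of the maximizing spindles in the counting lemma.
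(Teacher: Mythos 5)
Your proposal is correct and follows essentially the same route the paper intends: the paper derives this corollary ``instantly'' from the main theorem (maximum dot irreducible graphs are spindles) together with the counting lemma (complete triangles are the spindles with the most elements, namely $\frac{n(n-1)}{2}$) and the dot-irreducibility of spindles. The one step you spell out---that every dot irreducible dot graph embeds in a maximum dot irreducible graph, justified by termination of the dot-adding process via the pigeonhole bound $(n-1)\cdot n(n-1)$---is exactly the proposition the paper states (without proof) shortly after the corollary, so you have supplied a detail the paper leaves implicit rather than taken a different path.
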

	
	It is mentioned \cite{Jin} that the existence of super efficient geodesics will be justified by the geodesics of minimal total complexity. If a geodesic is dot irreducible with all reference arcs then it's irreducible and of minimal complexity. Theorem 1.1 of \cite{Jin} states for super efficient geodesics, we have $|\alpha_1\cup\gamma|\le 44$ for $g=2$ and $|\alpha_1\cup\gamma|\le 15(6g-8)$ for $g\ge 3$. For two consecutive intersections of $\alpha_1$ and $\gamma$. It's also super efficient so we can use the same argument and have $|\alpha_2\cup\gamma|\le 44^2$ or $(15(6g-8))^2$. By induction we have $|\alpha_i\cup\gamma|\le 44^i$ or $(15(6g-8))^i$ (by symmetry, it's also true for $|\alpha_{n+1-i}\cup\gamma|$). So we can define \emph{exponent} shape in the dot graph:
	
	\begin{de}
	    \emph{A} lower exponent shape of base $k$ \emph{in the dot graph is a shape satisfying the following:
	    \begin{enumerate}
		\item There are at most $k$ entries in the dot graph that equals $1$.
		\item Between two entries of $i$, there are at most $k$ entries that equals to $i+1$. This also holds before the first $i$ and after the last $i$.
		\end{enumerate}
		We can define an} upper exponent shape \emph{similarly, just replace $1$ with $n$ and $i+1$ with $i-1$.}
	\end{de}
	
	If $g=2$, we can take $k=44$ and if $g>2$ we can take $k=15(6g-8)$. So now we can improve our corollary for minimal complexity paths:
	
	\begin{cor}
		The maximal total intersecting number between a minimal complexity path and the reference arc $\gamma$ is $\sum_{i=1}^n \min(44^i,n-i)$ for $g=2$ and $\sum_{i=1}^n \min((15(6g-8))^i,n-i)$ for $g>2$ .
	\end{cor}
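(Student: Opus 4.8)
The plan is to read off the total intersection number as $\sum_{i=1}^{n-1} m_i$, where $m_i := |\alpha_i \cap \gamma|$ counts the entries of $\sigma$ equal to $i$, equivalently the dots of height $i$ in $G(\sigma)$, and then to impose two independent ceilings on each $m_i$. Writing $k = 44$ when $g = 2$ and $k = 15(6g-8)$ when $g > 2$, the target value $\sum_{i=1}^{n}\min(k^i, n-i)$ is exactly what results from capping each $m_i$ by both $n-i$ and $k^i$ and summing (the $i=n$ term being $0$).

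First I would establish the spindle ceiling. Since a minimal complexity path is dot irreducible against every reference arc, the main theorem makes $G(\sigma)$ a subgraph of a spindle, and the preceding corollary identifies the lower complete triangle $[\sla{1}{n-1},\ldots,\sla{1}{1}]$ as the total-maximizing shape. In that triangle the value $i$ appears precisely in the segments $\sla{1}{j}$ with $j \ge i$, so $m_i = n-i$; this is the shape I would take as the extremal configuration, yielding $m_i \le n-i$.

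Next I would invoke the exponent ceiling. Super efficiency of $v_0,\ldots,v_n$ descends to every terminal sub-geodesic $v_{i-1},\ldots,v_n$, so Jin's estimate $|\alpha_1 \cap \gamma| \le k$ applies at each stage and gives, by the induction already carried out before the corollary, $m_i \le k^i$; the lower exponent shape is precisely the combinatorial silhouette recording this bound, via at most $k$ dots of height $1$ and at most $k$ dots of height $i+1$ in each gap cut out by the dots of height $i$. Combining the two ceilings gives $m_i \le \min(n-i, k^i)$ for every $i$, and summation produces the claimed value. For sharpness I would truncate the lower complete triangle, deleting the surplus dots at each height $i$ for which $n-i > k^i$, and check that the residual graph is still a sawtooth dot graph realized by a minimal complexity super efficient geodesic.

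The step I expect to be the main obstacle is justifying that the two ceilings may be imposed simultaneously on a single configuration. The inequality $m_i \le n-i$ is only the complete-triangle value; a spindle at an interior parameter $k_0$ can carry up to $i - k_0$ dots at a large height $i$, which can exceed $n-i$, so one must argue that among the spindles compatible with the super-efficiency constraint the total is still maximized by the (truncated) complete triangle. In effect this means using the symmetric exponent bound $m_i \le k^{\,n-i}$ to suppress exactly those high-$i$ levels where a non-complete spindle could beat $n-i$. Verifying that the truncated triangle is genuinely realizable as a dot graph, rather than merely a numerical silhouette satisfying the inequalities, is the delicate point where the bulk of the care will be needed.
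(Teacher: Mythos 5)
Your proposal is essentially the paper's own argument: the paper states this corollary without a separate proof, obtaining it exactly as you do by capping each level $m_i = |\alpha_i \cap \gamma|$ simultaneously by the complete-triangle value $n-i$ (from the preceding corollary on dot irreducible dot graphs) and by the exponent bound $k^i$ (Jin's super-efficiency estimate iterated by induction on terminal sub-geodesics), then summing the minima. The obstacle you flag---that $m_i \le n-i$ holds only for the lower complete triangle, so that for other spindles one must invoke the symmetric bound $m_i \le k^{n-i}$ to suppress the high levels, and that sharpness requires realizing the truncated triangle by an actual minimal complexity geodesic---is genuine, but the paper never addresses it either (it is implicit only in the comb shape carrying both an upper and a lower exponent part), so on this point your treatment is more careful than the source.
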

	
	Taking the shape into consideration, we will have
	\begin{de}
	    \emph{A} comb \emph{shape is a dot graph which consists of a spindle in the middle and upper exponent shape in the top and a lower exponent shape in the bottom. (See Fig. \ref{figure: comb})}
	\end{de}
	
	And we immediately have a corollary:
	\begin{cor}
	    Maximum dot irreducible dot graphs of geodesics with minimal complexity are combs.
	\end{cor}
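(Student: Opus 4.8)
The plan is to assemble the comb from two ingredients that are already in hand: the spindle shape furnished by the main Theorem, and the row-by-row counting furnished by super efficiency. A geodesic of minimal complexity is dot irreducible with respect to every reference arc, so its maximum dot irreducible dot graph $G(\sigma)$ is, by the Theorem, a (possibly degenerate) spindle at some $k$. This immediately supplies the middle of the comb, namely the upper triangle, the maximal segment $\sla{1}{n-1}$, and the lower triangle, and reduces the task to showing that the two triangular parts conform to the exponent shapes.

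The heart of the argument is to verify the exponent condition in its local, recursive form rather than through the aggregate bound $|\alpha_i\cap\gamma|\le k^i$. Concretely, to bound the number of entries equal to $i+1$ lying between two consecutive entries equal to $i$ (and before the first such entry and after the last), I would restrict $\gamma$ to the sub-arc cut off by two consecutive intersections with $\alpha_i$ and observe that this sub-arc is itself a reference arc for the corresponding sub-configuration. Since a sub-geodesic of a minimal-complexity, hence super efficient, geodesic is again super efficient, Theorem 1.1 of \cite{Jin} bounds the number of $\alpha_{i+1}$-intersections along this sub-arc by $k$, where $k=44$ for $g=2$ and $k=15(6g-8)$ for $g>2$. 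Iterating this upward from $i=1$ produces the lower exponent shape of base $k$, and invoking the symmetry obtained by reversing the orientation of $\gamma$ and running the same argument downward from $n$ produces the upper exponent shape of base $k$.

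It then remains only to combine the pieces. The lower exponent shape governs the rows near $1$, which is exactly where the lower triangle of the spindle sits, while the upper exponent shape governs the rows near $n$, where the upper triangle sits; the maximal segment $\sla{1}{n-1}$ is a single segment, spanning all rows but counted once and hence untouched by either cap. A dot graph that is a spindle and simultaneously satisfies both exponent shapes is, by definition, a comb, so the identification is complete.

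I expect the main obstacle to be the localization step in the second paragraph: one must check that the restriction of $\gamma$ to the interval between two consecutive $\alpha_i$-intersections really is a bona fide reference arc for a sub-geodesic---that its endpoints satisfy conditions (2) and (4) in the definition of a reference arc, and that the relevant sub-path is still a geodesic inheriting super efficiency. Once this localization is justified the counting is immediate, and the passage from ``spindle plus exponent shapes'' to ``comb'' follows formally from the definitions.
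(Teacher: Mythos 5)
Your proposal follows the paper's own route: the paper likewise obtains the comb by combining the main theorem (the maximum dot irreducible dot graph is a spindle) with the exponent-shape bounds derived from Theorem 1.1 of \cite{Jin}, using exactly your localization argument---restricting $\gamma$ to the sub-arc between two consecutive $\alpha_i$-intersections, noting the sub-geodesic is again super efficient, and inducting upward (with the symmetric argument giving the upper exponent shape). The paper treats the final assembly as immediate from the definition of a comb, just as you do, and it shares the same unaddressed localization subtlety you flag at the end.
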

	
	    \begin{figure}[h]
\labellist

\pinlabel ${\rm lower \ exponent}$ at 1265 70

\pinlabel ${\rm upper \ exponent}$ at 300 850

\pinlabel ${\rm spindle}$ at 750 550

\endlabellist
    \centering
    \includegraphics[width=1.05\linewidth]{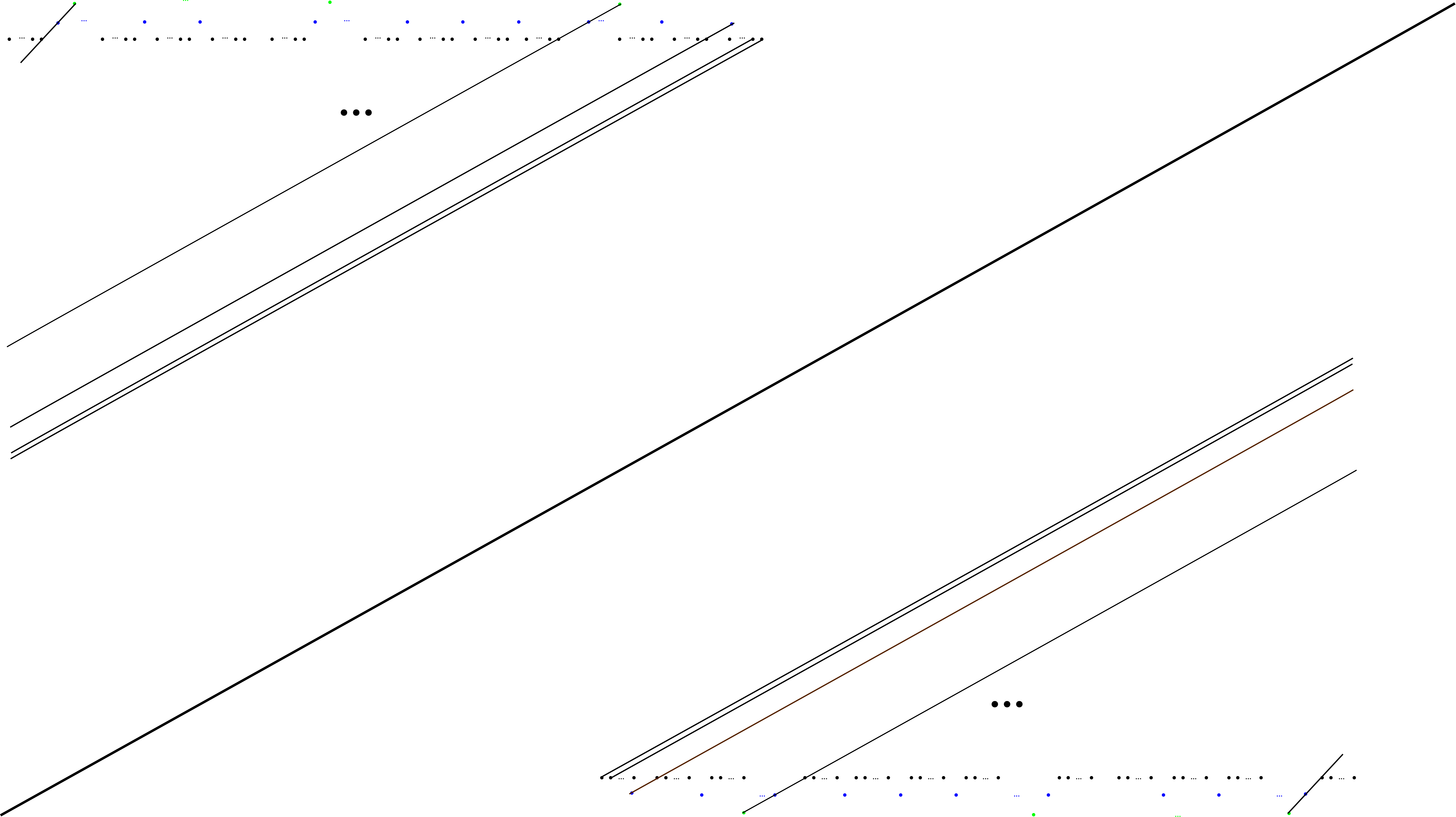}
		\caption{ A comb that is made up of three parts: the upper exponent shape, the spindle and the lower exponent shape}
		\label{figure: comb}
	\end{figure}
	
	For general dot irreducible dot graphs we have:
	
	\begin{prop}
		Every dot irreducible dot graph $G(\sigma)$ is a subgraph of a maximal dot irreducible graph.
	\end{prop}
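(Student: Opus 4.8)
The plan is to recognize the statement as the elementary fact that every element of a finite poset lies below a maximal element. Fix the length $n$, so that all dot graphs under consideration have entries in $\{1,\dots,n-1\}$. First I would observe that the collection of dot irreducible dot graphs of this length is \emph{finite}: by the preceding Lemma bounding multiplicities, a dot irreducible $G(\sigma)$ has at most $n(n-1)$ entries equal to each fixed value $k$, so $\sigma$ has at most $n(n-1)^2$ entries---indeed, by the preceding Corollary, at most $\tfrac{n(n-1)}{2}$. As the entries range over the finite set $\{1,\dots,n-1\}$ and the length is bounded, there are only finitely many such dot graphs in sawtooth form.

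Next I would introduce the relevant order. For two dot graphs write $G(\tau)\preceq G(\sigma)$ when $\tau$ is obtained from $\sigma$ by deleting some entries, that is, when $G(\tau)$ is a subgraph of $G(\sigma)$; reflexivity, transitivity, and antisymmetry (two comparable graphs with the same number of dots must coincide) are immediate, so $\preceq$ is a partial order. Restricting $\preceq$ to the finite set of dot irreducible dot graphs yields a finite poset $P$. The key identification is that the two notions of maximality agree: $G(\sigma)$ is $\preceq$-maximal in $P$ exactly when no dot irreducible graph is obtained by adjoining dots to it, which is precisely the defining property of a \emph{maximum dot irreducible dot graph}.

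Granting these identifications, the proof is completed by a termination argument. Starting from any dot irreducible $G(\sigma)$, if it is already maximal we are done; otherwise, by the definition of maximality, some nonempty collection of dots may be adjoined so that the result $G(\sigma')$ is still dot irreducible, giving $G(\sigma)\prec G(\sigma')$ with strictly more dots. Iterating produces a strictly $\preceq$-increasing chain whose dot-counts are bounded above by the finiteness estimate, so the chain stabilizes after finitely many steps at a dot irreducible graph admitting no further admissible dot---that is, a maximum dot irreducible dot graph containing $G(\sigma)$ as a subgraph.

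The step I expect to require the most care is making the relation $\preceq$ and the operation of adjoining dots precise and stable under sawtooth normalization. Adjoining a dot and re-normalizing via the commutations of Lemma 3.3 of \cite{BMM} can shift positions, so I must confirm that $\preceq$ descends to a well-defined partial order on sawtooth-equivalence classes and that deleting the adjoined dots returns a graph equivalent to $G(\sigma)$. Since commutations act by adjacent transpositions that preserve the underlying intersection data, this is routine, but it is the one place where the geometric notion of ``adding an intersection point'' must be matched to the combinatorial sequence model before the finite-poset argument applies.
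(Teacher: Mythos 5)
Your proof is correct, but there is nothing in the paper to compare it against: the paper states this proposition with no proof at all, immediately moving on to remark that the converse fails. The closest the paper comes to your argument is its observation, directly after the multiplicity lemma (if some value $k$ occurs more than $n(n-1)$ times then $\sigma$ is dot reducible), that this lemma makes the notion of maximum dot irreducible dot graph ``well-defined''; that is exactly the finiteness ingredient you isolate. Your finite-poset formalization --- bound the number of dots of any dot irreducible graph, order dot graphs by the subgraph (entry-deletion) relation, identify the $\preceq$-maximal elements with the maximum dot irreducible dot graphs, and run an ascending chain until it stabilizes --- is the standard way to turn that remark into an actual proof, and each step checks out: the multiplicity lemma (or the corollary giving the sharper bound $\frac{n(n-1)}{2}$) gives finiteness, and the negation of ``maximum'' is precisely ``some dots can be adjoined keeping dot irreducibility,'' which is what drives the chain upward. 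Note also that your argument correctly avoids the trap flagged by the paper's remark after the proposition: dot irreducibility is not inherited by subgraphs, but your chain only moves upward, so this is harmless. Your closing caveat about matching the geometric operation of adding intersection points to the combinatorial sequence model is apt, since the paper itself treats dot addition purely combinatorially (e.g.\ Step 1 of the main theorem's proof adds missing values ``at the leftmost of the dot graph'' with no geometric realization argument), so your proof operates at, and indeed slightly above, the paper's own level of rigor.
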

		
	Notice the converse may not be right. $[\sla{1}{1},\sla{1}{1}]$ is dot reducible however when we add one point $[\sla{1}{2},\sla{1}{1}]$ is dot irreducible.
	
	Since for all spindles, there exists a unique entry $i$ such that there's only one $i$ in the dot graph. So we have the corollary immediately:
	
	\begin{cor}
	    There exists a curve in any dot irreducible geodesics with respect of the reference arc such that it intersects the reference arc once and curves with distance $d$ with that curve intersect the reference arc at most $d+1$ times.
	\end{cor}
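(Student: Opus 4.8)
The plan is to reduce the statement to the explicit combinatorics of a single spindle. First I would invoke the main Theorem together with the Proposition that every dot irreducible dot graph is a subgraph of a maximal one: the dot graph $G(\sigma)$ of a dot irreducible geodesic sits inside a (possibly degenerate) spindle at some $k$ with $0 \le k \le n-2$. Since passing to a subgraph only deletes dots, it suffices to compute, height by height, the intersection counts of a spindle at $k$ and then bound $G(\sigma)$ by them.

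Next I would carry out the count directly. Reading a spindle at $k$,
$$\bigg[\sla{n-1}{n-1},\dots,\sla{k+2}{n-1},\sla{1}{n-1},\sla{1}{k},\dots,\sla{1}{1}\bigg],$$
the number of dots at height $i$ is the number of ascending segments $\sla{p}{q}$ with $p \le i \le q$. The upper segments $\sla{j}{n-1}$ with $k+2 \le j \le n-1$ contribute $\max(0,\,i-k-1)$, the maximal segment contributes $1$, and the lower segments $\sla{1}{m}$ with $1 \le m \le k$ contribute $\max(0,\,k-i+1)$. Summing these three terms yields exactly $|i-(k+1)|+1$ for every $i \in \{1,\dots,n-1\}$. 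In particular height $k+1$ is the unique height carrying a single dot and is the strict minimum of the count; indeed the only segment passing through it is the maximal one, while every upper segment starts above it and every lower segment ends below it.

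With the count in hand the corollary follows: I would take $\alpha_{k+1}$, the curve at the waist of the spindle, which meets $\gamma$ exactly once. For any vertex $v_j$ the geodesic property gives $d(v_j,v_{k+1}) = |j-(k+1)|$, so the formula reads $|\alpha_j \cap \gamma| = |j-(k+1)|+1 = d+1$ in the full spindle, and hence $|\alpha_j \cap \gamma| \le d+1$ for the subgraph $G(\sigma)$.

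The step I expect to require the most care is the existence clause---that some curve meets $\gamma$ \emph{exactly} once, rather than merely at most once---when $G(\sigma)$ is a proper subgraph that might omit the single waist dot. For the maximal dot irreducible representative this is automatic, since that graph is literally a spindle and height $k+1$ is occupied. To see that the waist cannot simply be discarded, I would observe that deleting it splits the maximal segment $\sla{1}{n-1}$ into $\sla{1}{k}$ and $\sla{k+2}{n-1}$, each of which duplicates a segment already present in the lower, respectively upper, triangle; the resulting pair of identical segments is a box, so the truncation is dot reducible. The remaining ingredients---the height count and the identity $d(v_j,v_{k+1}) = |j-(k+1)|$---are then routine.
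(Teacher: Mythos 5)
Your reduction and counting follow exactly the paper's (implicit) argument: the paper also obtains this corollary by combining the Proposition (every dot irreducible dot graph lies in a maximal one) with the Theorem (maximal dot irreducible graphs are spindles), the unique waist entry supplying the once-intersecting curve and the spindle's height counts supplying the bound $d+1$. Your formula $|i-(k+1)|+1$ and the identity $d(v_j,v_{k+1})=|j-(k+1)|$ just make explicit what the paper compresses into the word ``immediately.''

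The problem is in the step you yourself identified as delicate, and it is a genuine gap. The claim that ``the waist cannot simply be discarded'' is false: your box argument rules out only the single subgraph obtained by deleting exactly the waist dot while keeping every other dot of the spindle. A dot irreducible subgraph can perfectly well omit the waist together with other dots. Concretely, the lower triangle alone, $\bigl[\sla{1}{k},\sla{1}{k-1},\dots,\sla{1}{1}\bigr]$, is a subgraph of the spindle at $k$ containing no dot at height $k+1$, and it is dot irreducible by the paper's own lemma that complete triangles are dot irreducible. For such subgraphs the corollary is still true, but only because the unique-dot height moves (here to height $k$); and once the waist has moved to some height $h\neq k+1$, the bound you need --- at most $|j-h|+1$ dots at height $j$ --- no longer follows from containment in the spindle, whose bound $|j-(k+1)|+1$ is \emph{weaker} than $|j-h|+1$ at heights $j$ strictly between $h$ and $k+1$. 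Closing this requires invoking the dot irreducibility of $G(\sigma)$ itself a second time (arguments of the type in Steps 4 and 5 of the paper's main theorem, which for instance kill a configuration $\bigl[\sla{1}{q},\sla{1}{k},\dots\bigr]$ with $q<k$ by a box surgery), not merely the containment statement. In fairness, the paper's one-line derivation glosses over exactly the same point; but since your proposal explicitly sets out to close it, the patch as written does not succeed.
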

	
	Consider the vertex in the dot irreducible geodesics in the last corollary and the ascending segment passing through that vertex, we can divide the dot graph into three parts: a subgraph of an upper triangle, an ascending segment passing through that vertex, and a subgraph of an lower triangle (See Fig.1). Suppose the lower triangle part is $\bigg[\sla{p_1}{q_1}, \cdots, \sla{p_k}{q_k}\bigg]$. If $q_i\ge q_{i+1}$ then the dot graph will be reducible using Step.4 in the proof of Theorem 7. So the sequence $q_1,\cdots,q_n$ must be strictly  decreasing. With that fact we can define \emph{slope} of the lower triangle part as $\frac{q_n-a_1}{n-1}$ and the value will be no larger than $-1$. Similarly if the upper triagnle part is $\bigg[\sla{s_1}{t_1}, \cdots, \sla{s_k}{t_k}\bigg]$ the sequence $s_1,\cdots,s_n$ is strictly  decreasing, so we similarly define the slope $\frac{s_n-s_1}{n-1}$ whose value is also always no larger than $-1$.

	\section{Finding possible surgeries for dot graphs}
	\label{section: finding surgeries}
	From the discussion above we know if a (sawtooth) dot graph is not a subgraph of any maximal dot irreducible graphs, then it will be reducible. However since the inverse does not hold, we need to invent an algorithm to determine whether a graph is reducible.
	
	Let $p_i$ and $p_{i+1}$ be two adjacent points $p$ in the (sawtooth) dot graph. If there's no $p+1$ or $p-1$ lies in between, then we can do a surgery with this two point (actually this is a horizontal line in the dot graph). If there's $p+1$ or $p-1$, then we say it's blocking the surgery.
	
	For surgeries we have the following observations:
	
	(1) In one surgery only one pair of $p$ can be connected. That's because we don't know which points $p$ are left after connecting one pair of $p$, so other surgeries may not be possible.
	
	(2) For a group of surgery steps, if for every step that connect $p_i$ and $p_{i+1}$ there's at least one $p+1$ or $p-1$ in the other steps. We call the group \emph{connected} surgery steps. Notice one group of steps can only remove one point like the hexagon surgery.
	
	So we have the following ways to deal with blocking at $p+1$:
	
	(1) If there's only one $p+1$ in between, we can either connect the point to another point or remove that point. For both of them we need to check $p+2$ and $p$ and notice the observations we mentioned. If there's only one $p+1$ in the dot graph, then the surgery is impossible.
	
	(2) If there are two $p+1$ in between, we need to connect one of them to another point and remove the other one, then we also need to check $p+2$ and $p$ and notice the observations we mentioned.
	
	(3) If there are three or more $p+1$ in between, the surgery is impossible.
	
	For $p-1$ we use the similar way to check until we get rid of all blocking or find the surgery impossible.
	
	\begin{figure}[h]
		\scalebox{.7}{\includegraphics[angle=0,origin=c]{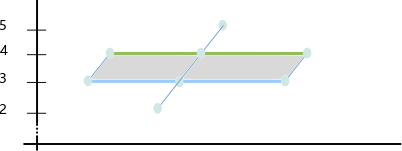}}
		\vspace{0cm}
		\caption{Pierced box $\bigg[\sla{3}{4},\sla{2}{5},\sla{3}{4}\bigg]$ and its possible surgeries}
		\label{surgery example}
	\end{figure}
	
	For example, for pierced box $\bigg[\sla{3}{4},\sla{2}{5},\sla{3}{4}\bigg]$ we can search for surgeries with point 3 with this algorithm:
	
	There are three $3'{\rm s}$ in this dot graph. If we combine the leftmost two $3'{\rm s}$, a $2$ and a $4$ will be blocking. For the $4$, we can either combine it with the $4$ in the middle, or we can combine the two rightmost $4'{\rm s}$ and drop the left one. However since the $2$ blocking is the unique $2$ in the dot graph, the surgery is impossible.
	
	Meanwhile, if we combine the rightmost two $3'{\rm s}$, a $4$ is blocking. We can combine that $4$ to either the left or the right one. If we connect to the left one then the only blocking $3$ is already combined. So the surgery is possible and the dot graph is dot reducible.  If we connect to the right one then there's a unique $6$ blocking making the surgery impossible.
	
	\section{Reference tree}
	\label{section: tree}
	
	Sometimes, it is impossible to find a single reference arc in one disk bounded by $\alpha_0\cup\alpha_n$ such that it intersect with all segments of $\alpha_1,...,\alpha_n$ inside that disk. (notice as a reference arc, it has to be in minimal position with all $\alpha_1,...,\alpha_n$) Below is an example:
	\begin{figure}[h]
		\scalebox{.4}{\includegraphics[angle=0,origin=c]{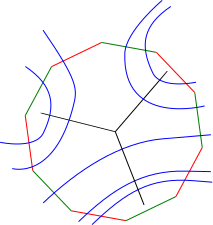}}
		\vspace{0cm}
		\caption{Red are arcs of $\alpha_0$, green are arcs of $\alpha_n$ while blue are segments of $\alpha_1,...,\alpha_n$}
		\label{reference tree}
	\end{figure}
	
	We can't find a curve to connect all the segments without nugatory crossings, however we can find a set of reference arcs (blue). Any of the three arcs or two arcs combined together is a reference arc of the green segments. We call the minimal collection of such reference arcs a \emph{reference graph}.
	
	\begin{prop}
		Any reference graph has no loops.
	\end{prop}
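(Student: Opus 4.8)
The plan is to argue by contradiction, using two ingredients already fixed in the setup: the ambient complementary region is a disk, and the reference arcs are required to be in minimal position (condition (4)) while the collection is minimal by definition. The payoff is that a loop would force either a redundant arc or a bigon, both forbidden; showing there are no loops is exactly what makes the collection a forest, justifying the name \emph{reference tree}.

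First I would fix notation. Since $\alpha_0\cup\alpha_n$ fills $S$, every component $D$ of $S\setminus(\alpha_0\cup\alpha_n)$ is an open disk, and the reference graph $R$ under discussion sits in the closure of one such $D$ with its interior in $D$. The arcs of $\alpha_1,\dots,\alpha_{n-1}$ lying in $D$ are properly embedded, with all endpoints on $\partial D\subset\alpha_0\cup\alpha_n$. Suppose, for contradiction, that $R$ contains a loop; being embedded in a disk, this loop $\ell$ bounds a subdisk $D'\subset D$, and among all such loops I would choose one for which $D'$ is innermost, so that $\mathrm{int}(D')$ meets no edge of $R$. The one structural fact I would record is that no segment of $\alpha_1,\dots,\alpha_{n-1}$ has an endpoint in $\mathrm{int}(D')$, since its endpoints lie on $\partial D$; hence any segment meeting $\mathrm{int}(D')$ must enter and leave across $\ell$, crossing it an even and positive number of times.

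I would then split into two cases. If $\mathrm{int}(D')$ meets no segment at all, then $\ell$ bounds a disk disjoint from every $\alpha_i$; an edge of $R$ on $\ell$ meets no segment on the $D'$-side, so it can be isotoped across $D'$ (or deleted outright) without changing which segments $R$ meets, contradicting the minimality of the reference graph. If instead some segment meets $\mathrm{int}(D')$, I would pick an outermost arc $a$ of $\alpha_i\cap D'$ cutting off an innermost disk $\Delta\subset D'$; by the innermost choice of $D'$ and of $a$, the portion of $\partial\Delta$ lying on $\ell$ is a subarc of $R$ that meets no further segment, so $\Delta$ is a bigon between $a$ and $R$, contradicting the minimal-position requirement (4). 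In either case the assumed loop is impossible.

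The subtle point, and the step I expect to require the most care, is the interaction of the bigon $\Delta$ with the vertices of $R$: the subarc of $\ell$ bounding $\Delta$ may pass through vertices where several reference arcs meet, so $\Delta$ need not be a bigon between $a$ and a \emph{single} reference arc. I would address this by keeping $a$ and $\Delta$ innermost and then isotoping $a$ across $\Delta$ to cancel its two endpoints on $\ell$; this strictly reduces $\sum_i |\alpha_i\cap R|$ while preserving the property that $R$ meets every segment, which is the common content of minimal position and of minimality of the collection. Pinning down a notion of minimality precise enough to license both the empty-disk deletion of the first case and this crossing-reduction in the second is therefore the crux of the argument.
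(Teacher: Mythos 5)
Your overall skeleton matches the paper's: assume a loop, note it bounds a disk inside the complementary disk of $\alpha_0\cup\alpha_n$, and split according to whether some segment of $\alpha_1,\dots,\alpha_{n-1}$ crosses into that disk, deriving a contradiction from minimality of the collection in the empty case and from minimal position in the non-empty case. Your first case is sound and is exactly the paper's parenthetical remark that such a crossing curve must exist ``otherwise the graph is not minimal.'' The problem is the second case, and it is precisely the point you yourself flag as the crux: when the outermost disk $\Delta$ is bounded by the arc $a$ and a subarc of $\ell$ that passes through vertices of $R$, nothing in the paper's definitions is contradicted by your reduction. Condition (4) is a \emph{pairwise} minimal-position requirement between a curve and each individual reference arc, so a bigon against a union of several reference arcs violates nothing; and minimality of the reference graph concerns which arcs belong to the collection, not the total count $\sum_i|\alpha_i\cap R|$, so strictly decreasing that count is not a contradiction either. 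Worse, your claim that the isotopy of $a$ across $\Delta$ ``preserves the property that $R$ meets every segment'' can fail outright: if those two crossings were the only intersections of that segment with $R$, the isotoped segment misses $R$ entirely. So the argument as written does not close.

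The paper closes this very case with a different and cleaner move, which you do not consider: when the segment exits the loop through a \emph{different} side than it entered (equivalently, when your bigon would span more than one edge of $R$), do not touch the curve at all --- instead remove a small neighbourhood of \emph{one} of the two intersection points from the graph $R$. Since triple intersections are trivial, this neighbourhood can be chosen to contain no other intersections; the segment in question still meets the cut graph at its other crossing, every other segment is unaffected, and cutting a loop edge keeps the graph connected. The result is a strictly smaller reference graph, contradicting minimality of the collection --- a contradiction that uses only the definitions already in place, with no need for the sharpened notion of minimality your version would require. Replacing your isotopy step with this graph-cutting step would complete your proof.
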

    \begin{proof}
	Suppose there's a loop, then it will bound a small polygon inside the disk bounded by $\alpha_0\cup\alpha_n$. Let $\alpha_i$ be a curve that intersect with a side of that polygon (such curve must exist, otherwise the graph is not minimal), then it must have another intersection. If it is on the same side, then the crossing is nugatory and can be reduced; otherwise we can remove a neighbourhood of that intersection from the graph and we have a smaller  reference graph, contradiction.
    \end{proof}
	With that result, we can call a reference graph \emph{reference tree}.  A reference tree is \emph{dot irreducible} if any reference arc from combining two arcs in the tree is dot irreducible. Here's an example of dot irreducible reference tree:
	
	\begin{figure}
		\scalebox{.4}{\includegraphics[angle=0,origin=c]{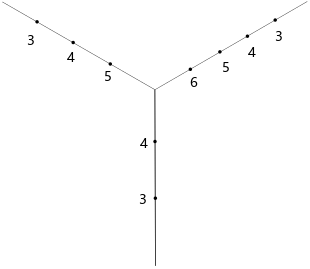}}
		\vspace{0cm}
		\caption{Dot irreducible reference tree}
		\label{dot irr tree}
	\end{figure}

	Three arcs from combining arcs are $\bigg[\sla{3}{5}, \sla{4}{4}, \sla{3}{3},\bigg]$, $\bigg[\sla{3}{6}, \sla{4}{4}, \sla{3}{3},\bigg]$ and $\bigg[\sla{3}{6}, \sla{5}{5},\sla{4}{4}, \sla{3}{3},\bigg]$, all of them are dot irreducible.
	
\end{document}